\newtheorem{theorem}{Theorem}[section]
\newtheorem{proposition}[theorem]{Proposition}
\newtheorem{corollary}[theorem]{Corollary}
\theoremstyle{definition}
\newtheorem{definition}[theorem]{Definition}
\theoremstyle{remark}
\numberwithin{equation}{section}
\def\id{{\bf 1}\!\!{\rm I}}
\def\g{\gamma}
\def\G{\Gamma}
\def\w{\omega}
\def\ch{{\mathcal H}}
\def\m{\mu}
\def\t{\tau}
\begin{document}

\title[Conditionally expectations and martingales ]{Conditionally expectations and martingales
in noncommutative $L_p$-spaces associated with center-valued
traces}
\author{Inomjon Ganiev}

\address{Department of Science in Engineering,
Faculty of Engineering, International Islamic University Malaysia,
P.O. Box 10, 50728 Kuala-Lumpur, Malaysia}
\email{inam@iium.edu.my}

\author{Farrukh Mukhamedov}
\address{Farrukh Mukhamedov\\
 Department of Computational \& Theoretical Sciences\\
Faculty of Science, International Islamic University Malaysia\\
P.O. Box, 141, 25710, Kuantan\\
Pahang, Malaysia} \email{{\tt far75m@yandex.ru} {\tt
farrukh\_m@iium.edu.my}}

\begin{abstract}
In this paper we prove the existence of conditional expectations in
the noncommutative $L_p(M,\Phi)$ spaces associated with
center-valued traces. Moreover, their description is also provided.
As an application of the obtained results, we establish the norm
convergence of the martingales in noncommutative $L_p(M,\Phi)$
spaces. \vskip 0.3cm \noindent

{\it Keywords:} Noncommutative $L_p(M,\Phi)$-space,
Banach-Kantorovich space; measurable; conditional
expectation; measurable bundle; martingale\\

{\it AMS Subject Classification:} 46L52; 46L53;
\end{abstract}

\maketitle

\section{Introduction}

 One of important nations of probability theory is a
conditionally expectation operators, which plays general roll
theory martingales, ergodic theory and et al.  Existence and basic
properties of conditional expectation operators provided in many
books of functional analysis and the theory of probability.

To the existence of conditional expectations on operator algebras
was first studied by Umegaki's works \cite{U1,U2}. Furthermore, the
existence criterion was provided in \cite{Ta}. Then the martingale
convergence theory for conditional expectations in von Neumann
algebras has been developed (see for example, \cite{DN,La,Ts}). One
of the important arias in the theory of operator algebras, is the
integration theory for traces and weights defined on von Neumann
algebras (se for example \cite{S,N,Y}). In this directions,
analogous of $L_p$-spaces have been investigated. In \cite{CP,G,HT}
the martingale convergence in non-commutative $L_p$-spaces was
obtained.

On the other hand, development of the theory of integration for
measures $\mu$ with the values in Dedekind complete Riesz spaces
has inspired the study of (bo)-complete lattice-normed
$L_p$-spaces (see, for example, \cite{K2}). The existence of
center-valued traces on finite von Neumann algebras naturally
leads to develop the theory of integration for this kind of
traces. In \cite{GaC} non-commutative $L_p$-spaces assiciated with
with central-valued traces have been investigated.  In \cite{CK}
an abstract characterization of these spaces have been provided.
Going further, in \cite{CZ} more general $L_p$-spaces associated
with Maharam traces have been studied.

Therefore, main aim of this paper to investigate the existence of
conditional expectations in this $L_p$-spaces. Note that first
vector-valued analog existence conditional expectations was given in
\cite{Kus} for commutative von Neumann algebras. It is known
\cite{GaC} that $L_p$-spaces associated with central-valued traces
are Banach-Kantorovich spaces. The theory of  Banach-Kantorovich
spaces is now sufficiently well-developed (for instance, see
\cite{Kus,K2}). The methods of Boolean valued analysis play an
important role in the theory. Using these methods, the
Banach-Kantorovich lattices and the corresponding homomorphisms can
be interpreted as the Banach lattice and bounded linear operators
within a suitable Boolean valued model of set theory \cite{Kus}.
This approach to the theory of  Banach-Kantorovich lattices makes ut
possible to use the transfer principle \cite{Kus} for obtaining
various properties of  Banach-Kantorovich lattices that are
analogous of the corresponding properties of classical Banach
lattices. Naturally, when using such a method, an additional study
is needed for establishing the required interrelation between the
objects of 2-valued and Boolean valued models of set theory.

Another important approach to study  Banach-Kantorovich spaces is
provided by the theory of continuous and measurable Banach bundles
\cite{G0}-\cite{G2}. In this approach the representation of a
Banach-Kantorovich lattice as a space of measurable sections of a
measurable Banach bundle makes it possible to obtain the needed
properties of the lattice by means of the corresponding stalkwise
verification of the properties. As an application of this approach,
in \cite{Ga} it has been given a representation of the conditional
expectations as measurable bundle of classical conditional
expectations, and proved martingale convergence theorems. Moreover,
noncommutative $L_p(M,\Phi)$-spaces associated with center-valued
traces are represented as bundle of noncommutative $L_p$-spaces
associated with numerical traces \cite{GaC}. In \cite{CK} an
abstract characterization of these spaces are provided. Certain
other extensions of these $L_p$-spaces have been investigated in
\cite{CZ}. For other applications of the mentioned method, we refer
the reader to \cite{AAK,CGa},\cite{GM}-\cite{GM4}.

In the present paper, we mainly employ the last mentioned approach
to establish the existence of conditional expectations on
noncommutative $L_p(M,\Phi)$-spaces associated with center-valued
traces. Moreover, we describe such expectations and as an
application of the obtained results, we prove the norm convergence
of martingales in noncommutative $L_p(M,\Phi)$-spaces. We note that
the obtained results open new perspective in the field of martingale
convergence.

\section{Preliminaries}

In this section we recall necessary notions and facts which will
be used in the next sections.

Let $(\Omega,\Sigma,\mu)$ be a measurable space with finite
measure $\mu$, and $L_0(\Omega)$ be the algebra of all measurable
functions on $\Omega$ (as usual, a.e. equal functions are
identified). By ${\mathcal{L}}^{\infty}(\Omega)$ we denote the set
of all measurable essentially bounded functions on $\Omega$, and
$L^{\infty}(\Omega)$ denote an algebra of equivalence classes of
essentially bounded measurable functions.

Let $\mathcal{U}$ be a linear space over the real field
$\mathbb{R}$.  By $\|\cdot\|$ we denote a $L_0(\Omega)$-valued
norm on $U$. Then the pair $(\mathcal{U},\|\cdot\|)$ is called a
{\it lattice-normed space (LNS) over $L_0(\Omega)$}. An LNS
$\mathcal{U}$ is said to be {\it $d$-decomposable} if for every
$x\in \mathcal{U}$ and the decomposition $\|x\|=f+g$ with $f$ and
$g$ disjoint positive elements in $L_0(\Omega)$ there exist
$y,z\in \mathcal{U}$ such that $x=y+z$ with $\|y\|=f$, $\|z\|=g$.

Suppose that $(\mathcal{U},\|\cdot\|)$ is an LNS over
$L_0(\Omega)$. A net $\{x_\alpha\}$ of elements of $\mathcal{U}$
is said to be {\it $(bo)$-converging} to $x\in \mathcal{U}$ (in
this case, we write $x=(bo)$-$\lim x_\alpha$), if the net
$\{\|x_\alpha - x\|\}$ $(o)$-converges to zero in $L_0(\Omega)$
(written as $(o)$-$\lim \|x_\alpha -x\|=0$). A net
$\{x_\alpha\}_{\alpha\in A}$ is called {\it $(bo)$-fundamental} if
$(x_\alpha-x_\beta)_{(\alpha,\beta)\in A\times A}$
$(bo)$-converges to zero.

An LNS in which every $(bo)$-fundamental net $(bo)$-converges is
called {\it $(bo)$-complete}. A {\it Banach-Kantorovich space
(BKS) over $L_0(\Omega)$} is a $(bo)$-complete $d$-decomposable
LNS over $L_0(\Omega)$. It is well known \cite{G1,G2} that every
BKS $\mathcal{U}$ over $L_0(\Omega)$ admits an
$L_0(\Omega)$-module structure such that $\|fx\|=|f|\cdot\|x\|$
for every $x\in \mathcal{U},\ f\in L_0(\Omega)$, where $|f|$ is
the modulus of a function $f\in L_0(\Omega)$.

A set  $B \subset \mathcal{U}$ is called \emph{bounded}, if set
$\{\|x\|: x \in B\}$ is order bounded in $ L_{0}$.
 An operator $T: \mathcal{U} \rightarrow \mathcal{U}$  is called
  $L_{0}$-linear, if $T(\alpha x + \beta y)= \alpha T(x) + \beta T(y)$
  for all $\alpha,\beta \in L_{0}$ and $x,y \in \mathcal{U}$.
 An $L_{0}$-linear operator $T$ is called $ L_{0}$--bounded,
  if for any bounded set  $B$ in $\mathcal{U}$,
   the set  $T (B)$ bounded in $\mathcal{U}$.
  For an $L_{0}$-bounded operator  $T$ we put
 $$\|T\|=\sup\{\|T(x)\|: \|x\|\leq \bf{1}\},$$  where $\bf{1}$ - identity element in  $L_{0}$

Let $H$ be a Hilbert space, let $B(H)$ be the $*$-algebra of all
bounded linear operators on $H,$ and let $I$ be the identity
operator on $H.$  Given  a von Neumann algebra $M$ acting on $H,$
denote by $Z(M)$  the center of $M$ and by $P(M)$  the lattice of
all projections in $M$. Let  $P_{fin}(M)$ be the set of all finite
projections in $M.$  A densely-defined  closed linear operator $x$
(possibly unbounded) affiliated with $M$ is said to be
\emph{measurable} if there exists a sequence
$\{p_n\}_{n=1}^{\infty}\subset P(M)$ such that $p_n\uparrow
\mathbf{1}$, \ $p_n(H)\subset \mathfrak{D}(x)$ and
$p_n^\bot=\mathbf{1}-p_n \in P_{fin}(M) $ for every $n=1,2,\ldots$
(here $\mathfrak{D}(x)$ is the domain of $x$). Let us denote by
$S(M)$ the set  of all measurable  operators. If $M$ is a
commutative von Neumann algebra, it is $*$-isomorphic to the
$*$-algebra $L_\infty(\Omega,\Sigma,\mu)$ of all essentially
bounded complex measurable functions  with the identification
almost everywhere, where $(\Omega,\Sigma,\mu)$ is a measurable
space.  In addition $S(M)\cong L_0(\Omega,\Sigma,\mu)$ \cite{S}.

Let $x,y$ be measurable  operators. Then $x+y,~xy$ and $x^*$ are
densely-defined and preclosed. Moreover, the closures
$\overline{x+y}$ (strong sum), $\overline{xy}$ (strong product)
and $x^*$ are also measurable, and $S(M)$ is a  $*$-algebra with
respect to the strong sum, strong product, and the adjoint
operation (see \cite{S}). For any subset $E\subset S(M)$ we denote
by $E_{sa}$ (resp. $ E_+$ ) the set of all self-adjoint (resp.
positive ) operators from $E.$

For  $x\in S(M)$  let  $x=u|x|$ be the polar decomposition, where
$|x|=(x^*x)^{\frac{1}{2}},$ $u$ is a partial isometry in $B(H).$
Then $u\in M$ and $|x|\in S(M).$ If $x\in S_h(M)$ and
$\{E_\lambda(x)\}$ are the spectral  projections of $x,$ then
$\{E_\lambda(x)\}\subset P(M).$

The locally measure topology  $t(M)$ on $L_0(\Omega,\Sigma,\mu)$
is by definition the linear (Hausdorff) topology whose fundamental
system of neighborhoods of $0$ is given by $$
W(B,\varepsilon,\delta)=\{f\in\ L_0(\Omega,\, \Sigma,\, \mu)
\colon \hbox{ there exists a set } \ E\in \Sigma, \mbox{ such
that} $$
$$  \ E\subseteq B, \ \mu(B\setminus E)\leq\delta, \
f\chi_E \in L^\infty(\Omega,\Sigma,\mu), \
\|f\chi_E\|_{{L_\infty}(\Omega,\Sigma,\mu)}\leq\varepsilon\}.
$$ Here \ $\varepsilon, \ \delta $ run over all strictly positive
numbers and  $B\in\Sigma$, \ $\mu(B)<\infty.$ It is known that
$(S(M),t(M))$ is a complete topological $*$-algebra. Note that a
net $\{f_\alpha\}$ converges  locally in measure to $f$ (notation:
$f_\alpha \stackrel{t(M)}{\longrightarrow}f$) if and only if
$f_\alpha \chi_B $ converges in $\mu$-measure to $f\chi_B$ for
each $B\in\Sigma$ with $\mu(B)<\infty$.

Let $M$ be any finite von Neumann algebra, $S(M)$ be the set all
measurable operators affiliated to $M$. Let $Z$ be some subalgebra
of the center $Z(M)$. Then one may identify $Z$ with $*$-algebra
$L_\infty(\Omega,\Sigma,m)$ and do $S(Z)$ with
$L_0(\Omega,\Sigma,m)$. Recall that \textit{a center valued (i.e.
$Z$-valued) trace} on the von Neumann algebra $M$ is a $Z$-linear
mapping $\Phi:M\to Z$ with $\Phi(x^*x)=\Phi(xx^*)\geq 0$ for all
$x\in M$. It is clear that $\Phi(M_+)\subset Z_+$. A trace $\Phi$
is said to be \textit{faithful} if the equality $\Phi(x^*x)=0$
implies $x=0$, \textit{normal} if
$\Phi(x_{\alpha})\uparrow\Phi(x)$ for every $x_{\alpha},x\in
M_{sa}$, $x_{\alpha}\uparrow x$. Note that the existence of such
kind of traces has been studied in \cite{CZ1}.

Let now $M$ be an arbitrary finite von Neumann algebra, $\Phi$ be
a center-valued trace on $M$. The locally measure topology $t(M)$
on $S(M)$ is  the linear (Hausdorff) topology whose fundamental
system of neighborhoods of $0$ is given by $$ V(B,\varepsilon,
\delta ) = \{x\in S(M)\colon \ \mbox{there exists } \ p\in P(M),
z\in P(Z(M)) $$  $$ \mbox{ such that} \ xp\in M,
\|xp\|_{M}\leq\varepsilon, \ z^\bot \in W(B,\varepsilon,\delta), \
\Phi_M(zp^\bot)\leq\varepsilon z\},$$ where $\|\cdot\|_{M}$ is the
$C^*$-norm in $M.$  It is known that $(S(M),t(M))$ is a complete
topological $*$-algebra \cite{Y}.

From \cite[\S 3.5]{Mur_m} we have the following criterion for
convergence  in the topology $t(M). $

\begin{proposition}\label{2.1.} A net $\{x_\alpha\}_{\alpha\in A} \subset S(M)$ converges to zero in the topology $t(M)$   if and only if $\Phi_M(E^\bot_\lambda (|x_\alpha|) \stackrel{t(M)}{\longrightarrow} 0$ for any $\lambda>0.$
\end{proposition}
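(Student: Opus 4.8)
The plan is to realise, in each implication, the spectral projection $E_\lambda(|x_\alpha|)$ as the projection $p$ that occurs in the definition of the neighbourhood $V(B,\varepsilon,\delta)$, and to pass between the two descriptions by an elementary comparison of projections. The bridging lemma I would isolate first is: \emph{if $p\in P(M)$ satisfies $xp\in M$ and $\|xp\|_{M}\le\varepsilon$ with $0<\varepsilon<\lambda$, then $E^{\bot}_\lambda(|x|)\wedge p=0$.} Indeed, if this meet were nonzero, a unit vector $\xi$ in its range would satisfy $\||x|\xi\|\ge\lambda$ (since $|x|\ge\lambda$ on the range of $E^{\bot}_\lambda(|x|)$) and at the same time $\||x|\xi\|=\|x\xi\|=\|xp\,\xi\|\le\varepsilon<\lambda$, a contradiction (the domain point is handled by the usual spectral truncation). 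Applying the Kaplansky parallelogram law then gives $E^{\bot}_\lambda(|x|)\sim\big(E^{\bot}_\lambda(|x|)\vee p\big)-p\le p^{\bot}$, so $E^{\bot}_\lambda(|x|)\preceq p^{\bot}$; since $\Phi_M$ is positive and invariant under Murray--von Neumann equivalence, this produces the key estimate $\Phi_M\big(E^{\bot}_\lambda(|x|)\big)\le\Phi_M(p^{\bot})$.

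For necessity, assume $x_\alpha\to 0$ in $t(M)$, fix $\lambda>0$ and a target neighbourhood $W(B,\varepsilon',\delta')$ of $0$ in $L_0(\Omega)$ with $\varepsilon'<1$. Taking $\varepsilon<\min\{\lambda,\varepsilon'\}$ and $\delta\le\delta'$, there are eventually projections $p_\alpha\in P(M)$, $z_\alpha\in P(Z(M))$ with $\|x_\alpha p_\alpha\|_M\le\varepsilon$, $z_\alpha^{\bot}\in W(B,\varepsilon,\delta)$ and $\Phi_M(z_\alpha p_\alpha^{\bot})\le\varepsilon z_\alpha$. Combining the bridging estimate with the $Z$-linearity of $\Phi_M$ yields
$$z_\alpha\,\Phi_M\big(E^{\bot}_\lambda(|x_\alpha|)\big)\le z_\alpha\,\Phi_M(p_\alpha^{\bot})=\Phi_M(z_\alpha p_\alpha^{\bot})\le\varepsilon z_\alpha .$$
Hence $\Phi_M\big(E^{\bot}_\lambda(|x_\alpha|)\big)\le\varepsilon$ on $\mathrm{supp}(z_\alpha)$, while $z_\alpha^{\bot}\in W(B,\varepsilon,\delta)$ with $\varepsilon<1$ forces $\mu\big(\mathrm{supp}(z_\alpha^{\bot})\cap B\big)\le\delta$. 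Choosing $E=B\setminus\mathrm{supp}(z_\alpha^{\bot})$ shows $\Phi_M\big(E^{\bot}_\lambda(|x_\alpha|)\big)\in W(B,\varepsilon',\delta')$ eventually, i.e. the center-valued trace converges to $0$ locally in measure.

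For sufficiency, fix an arbitrary $V(B,\varepsilon,\delta)$; I may assume $\varepsilon<1$. Put $p_\alpha=E_\varepsilon(|x_\alpha|)$, so that automatically $x_\alpha p_\alpha\in M$, $\|x_\alpha p_\alpha\|_M\le\varepsilon$ and $p_\alpha^{\bot}=E^{\bot}_\varepsilon(|x_\alpha|)$. Write $f_\alpha=\Phi_M\big(E^{\bot}_\varepsilon(|x_\alpha|)\big)\in Z_+$. The hypothesis applied with $\lambda=\varepsilon$ gives $f_\alpha\to 0$ locally in measure, hence $\mu\big(\{f_\alpha>\varepsilon\}\cap B\big)\to 0$. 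Defining the central projection $z_\alpha$ by $z_\alpha^{\bot}=\chi_{\{f_\alpha>\varepsilon\}}$, one has $z_\alpha f_\alpha\le\varepsilon z_\alpha$, that is $\Phi_M(z_\alpha p_\alpha^{\bot})\le\varepsilon z_\alpha$, and $\mathrm{supp}(z_\alpha^{\bot})\cap B=\{f_\alpha>\varepsilon\}\cap B$ has measure $\le\delta$ for large $\alpha$, so $z_\alpha^{\bot}\in W(B,\varepsilon,\delta)$. Thus $x_\alpha\in V(B,\varepsilon,\delta)$ eventually, which is exactly $x_\alpha\to 0$ in $t(M)$.

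I expect the main obstacle to be the bookkeeping with the central projections rather than the spectral comparison. One has to check carefully that the purely existential membership $z^{\bot}\in W(B,\varepsilon,\delta)$ is equivalent, for $\varepsilon<1$, to the support bound $\mu(\mathrm{supp}(z^{\bot})\cap B)\le\delta$, and that the $Z$-valued inequality $\Phi_M(zp^{\bot})\le\varepsilon z$ can be read off as an a.e. pointwise bound on $\mathrm{supp}(z)$ matching the $L_\infty$-condition in the definition of $W$. Keeping the measure governing the center and the finite-measure sets $B$ used in $W$ consistent throughout, and arranging a single $\varepsilon$ to serve simultaneously as the operator-norm bound on $x_\alpha p_\alpha$ and as the spectral cut-off level, is where the care is needed; the comparison lemma itself is routine once the parallelogram law and the equivalence-invariance of $\Phi_M$ are available.
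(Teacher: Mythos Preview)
The paper does not prove this proposition at all; it is simply quoted from \cite[\S 3.5]{Mur_m} (Muratov--Chilin) as a known criterion. There is therefore no proof in the paper to compare against, and what you have written is a genuine self-contained argument rather than a reconstruction of the authors'.

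Your proof is correct and follows the natural route. The spectral projection $E_\varepsilon(|x_\alpha|)$ is precisely the canonical witness $p$ for membership in $V(B,\varepsilon,\delta)$, which handles sufficiency; for necessity the comparison lemma $E^\bot_\lambda(|x|)\preceq p^\bot$ (via $E^\bot_\lambda(|x|)\wedge p=0$ and the parallelogram law) together with the trace property of $\Phi_M$ gives $\Phi_M(E^\bot_\lambda(|x|))\le\Phi_M(p^\bot)$, and the rest is the central-projection bookkeeping you carry out. Your reading of $z^\bot\in W(B,\varepsilon,\delta)$ for a $\{0,1\}$-valued $z^\bot$ and $\varepsilon<1$ as the support bound $\mu(\mathrm{supp}(z^\bot)\cap B)\le\delta$ is correct, as is the passage from $\Phi_M(zp^\bot)\le\varepsilon z$ to a pointwise $L^\infty$-bound on $\mathrm{supp}(z)$ via $Z$-linearity.

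One small sharpening for a written version: in the bridging lemma, the domain issue is resolved not by spectral truncation but directly by the closedness of $x$. If $\overline{xp}\in M$ then for any $\eta\in p(H)$ one picks $\zeta_n$ in the (dense) domain of $xp$ with $\zeta_n\to\eta$; then $p\zeta_n\to\eta$ and $x(p\zeta_n)\to\overline{xp}\,\eta$, so closedness gives $\eta\in\mathfrak D(x)=\mathfrak D(|x|)$. With this in hand the inequality $\lambda\le\||x|\xi\|=\|x\xi\|\le\varepsilon$ for a unit vector $\xi$ in the range of $E^\bot_\lambda(|x|)\wedge p$ is fully justified.
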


Following \cite{CZ} an operator  $x\in S(M)$ is said to be {\em
$\Phi$-integrable} if there exists a sequence $\{x_n\} \subset M$
such that $x_n \stackrel{t(M)}{\to} x $ and $\|x_n-x_m\|_\Phi
\stackrel{t(Z)}{\longrightarrow} 0$ as $n,m \to \infty.$

Let $x$ be a $\Phi$-integrable operator from $ S(M).$ Then  there
exists a $\widehat{\Phi}(x)\in S(Z)$ such that $\Phi(x_n)
\stackrel{t(Z)}{\longrightarrow}\widehat{\Phi}(x).$ In addition
$\widehat{\Phi}(x)$ does not depend on the choice of a sequence
$\{x_n\}\subset M,$ for which
$x_n\stackrel{t(M)}{\longrightarrow}x,$ $\Phi(|x_n-x_m|)
\stackrel{t(Z)}{\longrightarrow}0$ \cite{CZ}. It is clear that
each operator $x\in M$ is $\Phi$-integrable and $\widehat{\Phi}
(x)=\Phi(x).$

Denote by $L_1(M,\Phi)$ the set of all $\Phi$-integrable operators
from $S(M).$ If $x\in S(M)$ then $x\in L_1(M,\Phi)$ iff $|x|\in
L_1(M,\Phi),$  in addition $|\widehat{\Phi}(x)|\leq
\widehat{\Phi}(|x|)$ \cite{CZ1}. For any $x\in L_1(M,\Phi),$ set
$\|x\|_{1,\Phi}=\widehat{\Phi}(|x|).$ It is known that
$L_1(M,\Phi)$ is a linear subspace of $S(M),$ $ML_1(M,\Phi)M
\subset L_1(M,\Phi),$ and $x^*\in L_1(M,\Phi)$ for all $x\in
L_1(M,\Phi)$ \cite{CZ1}. Let $\Phi$ be a faithful normal trace on
$M$ with values in $L_0(\Omega)$ (associated with  $Z$). Following
\cite{CZ} for any $p > 1$, we set $L_p(M,\Phi) = \{x \in S(M) :
\|x\|_{p,\Phi} \in L_1(M,\Phi)\}$ and $\|x\|_{p,\Phi} =
\widehat{\Phi}(|x|^p)^{\frac{1}{p}}$.

\begin{proposition}\cite{CZ1,CZ} \label{1.1} Let $x,y\in S(M)$. Then the
following statements hold:
\begin{enumerate}
\item[(i)]$ \|x\|_{1,\Phi}=\sup\{|\Phi(xy)|: \|y\|_M\leq 1\}.$

\item[(ii)] for $p>1$, one has $\|x\|_{p,\Phi}=\sup\{|\Phi(xy)|:
\|y\|_{q,\Phi}\leq \mathbf{1}\},$ where
$\frac{1}{p}+\frac{1}{q}=1.$
\end{enumerate}
\end{proposition}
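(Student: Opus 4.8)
The plan is to prove both formulas by the familiar two-sided scheme: first bound $|\Phi(xy)|$ above by the claimed norm for every admissible $y$, and then exhibit (up to a central normalization) a single witness $y$ at which equality is attained, so that the order-supremum in $L_0(\Omega)$---which exists by Dedekind completeness of the Riesz space once the set is order bounded---is actually realized. Throughout I will lean on two facts recorded above from \cite{CZ,CZ1}: the pointwise estimate $|\widehat{\Phi}(z)|\leq\widehat{\Phi}(|z|)$ for $z\in L_1(M,\Phi)$, and the bimodule property $ML_1(M,\Phi)M\subseteq L_1(M,\Phi)$, which guarantees that all the products appearing below are genuinely $\Phi$-integrable.

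For (i) the upper bound is immediate: if $\|y\|_M\leq 1$ then $xy\in L_1(M,\Phi)$ and $|\Phi(xy)|\leq\widehat{\Phi}(|xy|)\leq\|y\|_M\,\widehat{\Phi}(|x|)\leq\|x\|_{1,\Phi}$, where the middle inequality is the $L_0(\Omega)$-module contraction estimate $\|xy\|_{1,\Phi}\leq\|y\|_M\,\|x\|_{1,\Phi}$. For the reverse inequality I take the polar decomposition $x=u|x|$ with $u\in M$ a partial isometry and $u^*u$ the support of $|x|$; then $u^*x=|x|$, and the trace property $\Phi(ab)=\Phi(ba)$ gives $\Phi(xu^*)=\Phi(u^*x)=\widehat{\Phi}(|x|)=\|x\|_{1,\Phi}$. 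Since $\|u^*\|_M\leq 1$, the value $\|x\|_{1,\Phi}$ lies in the set over which the supremum is taken, and (i) follows.

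For (ii) the upper bound is the center-valued Hölder inequality $\widehat{\Phi}(|xy|)\leq\|x\|_{p,\Phi}\,\|y\|_{q,\Phi}$, so that $|\Phi(xy)|\leq\|x\|_{p,\Phi}$ whenever $\|y\|_{q,\Phi}\leq\mathbf{1}$. To attain this, again write $x=u|x|$ and set $y_0=|x|^{p-1}u^*$. A functional-calculus computation then yields $xy_0=u|x|^pu^*$ and, using $(p-1)q=p$, $|y_0|^q=u|x|^pu^*$; applying $\widehat{\Phi}$ together with the trace property gives $\Phi(xy_0)=\widehat{\Phi}(|x|^p)=\|x\|_{p,\Phi}^{\,p}$ and $\|y_0\|_{q,\Phi}^{\,q}=\|x\|_{p,\Phi}^{\,p}$, whence $\|y_0\|_{q,\Phi}=\|x\|_{p,\Phi}^{\,p/q}$. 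Normalizing $y=c\,y_0$ by the central factor $c=\|x\|_{p,\Phi}^{-p/q}$ (read off on the support projection $e\in P(Z)$ of $\|x\|_{p,\Phi}$, and $0$ on $e^{\perp}$) produces $\|y\|_{q,\Phi}=\mathbf{1}$ on $e$ and, by $Z$-linearity of $\widehat{\Phi}$, $\Phi(xy)=\|x\|_{p,\Phi}^{\,p-p/q}=\|x\|_{p,\Phi}$, the last equality because $1-1/q=1/p$.

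The step I expect to be genuinely delicate is this final central normalization together with its bookkeeping on the support projection. Because $\|x\|_{p,\Phi}$ is an $L_0(\Omega)$-valued quantity rather than a scalar, the factor $c$ must be inverted only where it is nonzero, and I must invoke the module structure (equivalently, $d$-decomposability) of the Banach--Kantorovich space to patch the normalized witness $y$ on $e$ with the zero section on $e^{\perp}$; on $e^{\perp}$ both $\|x\|_{p,\Phi}$ and every $\Phi(xy)$ vanish, so the asserted identity is trivially consistent there. A secondary point requiring care is the verification that $y_0=|x|^{p-1}u^*$ indeed belongs to $L_q(M,\Phi)$ and that the functional-calculus identities for $|y_0|^q$ hold at the level of the (generally unbounded) measurable operators; these reduce, via the representation of $L_p(M,\Phi)$ as a measurable bundle of classical $L_p$-spaces from \cite{GaC}, to the corresponding stalkwise scalar statements, which are exactly the classical duality computations.
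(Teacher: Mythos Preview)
The paper does not supply a proof of this proposition at all; it is quoted verbatim from \cite{CZ1,CZ} and used as a black box later in the proof of Theorem~\ref{Exp2}. So there is no argument in the paper against which to compare your attempt.

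That said, your direct argument is the standard one and is sound. The scheme---upper bound via $|\widehat{\Phi}(xy)|\leq\widehat{\Phi}(|xy|)$ together with the module estimate $\|xy\|_{1,\Phi}\leq\|y\|_M\|x\|_{1,\Phi}$ (respectively the center-valued H\"older inequality), and attainment via the polar-decomposition witness $y_0=|x|^{p-1}u^*$---is exactly how the scalar-trace case is proved, and it transfers once the trace identity $\widehat{\Phi}(ab)=\widehat{\Phi}(ba)$ and H\"older are available from \cite{CZ,CZ1}. The two points you single out as delicate are precisely the ones that are genuinely new in the $L_0(\Omega)$-valued setting: inverting $\|x\|_{p,\Phi}$ only on its support projection $e\in P(Z)$ and using $d$-decomposability to glue the normalized witness on $e$ with zero on $e^{\perp}$. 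Your handling of both is correct; on $e^{\perp}$ one has $\|x\|_{p,\Phi}=0$, hence $x e^{\perp}=0$ and every $\Phi(xy)$ vanishes there, so the supremum equality holds trivially on that piece.
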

%
%\begin{proof} (i). As $|\Phi(xy)|\leq \Phi(|xy|)\leq \|x\|_1\|y\|_M$ we
%get $\sup\{|\Phi(xy)|: \|y\|_M\leq 1\}\leq\|x\|_1.$ Let $x=u|x|$
%be polar decomposition of $x$ and $y=u^*.$ Then
%$\Phi(xy)=\Phi(xu^*)=\Phi(u^*x)=\Phi(|x|)=\|x\|_1.$ Hence
%$\|x\|_1=\sup\{|\Phi(xy)|: \|y\|_M\leq 1\}.$
%
%(ii) proved in \cite{CZ} (see Theorem 4.3).
%
%\end{proof}

In the early 90's of the last century A.E. Gutman \cite{G1}
introduced  the
 measurable Banach bundles with lifting axioms. It was established that
every Banach--Kantorovich space over the ring of measurable
functions can be expressed as measurable bundle of Banach spaces.
Now let us recall some notions from this approach.

Let $X$ be a mapping which maps every point $\omega\in \Omega$
    to some Banach space  $(X(\omega),\|\cdot\|_{X(\omega)})$.  In what follows,
    we assume that $X(\omega)\neq \{0\}$  for all
    $\omega\in \Omega.$
A function $u$
  is said to be a \textit{section} of $X$,
   if it is defined almost everywhere in  $\Omega$
    and takes its value $u(\omega)\in X(\omega)$
      for   $\omega\in dom(u),$
      where  $\omega\in dom(u)$ is the domain of
      $u.$ Let   $L$ be some set of sections.

\begin{definition} \cite{G1}. A pair  $(X, L)$ is said to be
a {\it measurable bundle
 of Banach spaces} over $\Omega$   if
\begin{enumerate}
\item[1.]  $\lambda_1 c_1+\lambda_2 c_2\in L$
  for all  $\lambda_1, \lambda_2\in \mathbb{R}$
   and $c_1, c_2\in L,$ where
   $\lambda_1 c_1+\lambda_2 c_2:\omega\in dom(c_1)\cap
   dom(c_2) \rightarrow \lambda_1 c_1(\omega)+\lambda_2 c_2(\omega);$

\item[2.]  the function $||c||:\omega\in  dom(c)\rightarrow
||c(\omega)||_{X(\omega)}$
  is measurable for all $c\in L;$

\item[3.]  for every $\omega\in \Omega$
  the set  $\{c(\omega): c\in L, \omega\in dom(c)\}$
   is dense in $X(\omega).$
\end{enumerate}
   \end{definition}

   A section $s$ is a step-section, if there are pairwise disjoint sets
  $A_1,A_2,\ldots,A_n\in\Sigma$ and sections
 $c_1,c_2,\ldots,c_n\in L$ such that $\bigcup\limits_{i=1}^n
 A_i=\Omega$ è $s(\omega)=\sum\limits_{i=1}^n
 \chi_{A_i}(\omega)c_i(\omega)$ for almost all $\omega\in\Omega$.

 A section $u$ is measurable, if for any $A\in\Sigma$
 there is a sequence $s_n$ of step-sections such that
 $s_n(\omega)\rightarrow u(\omega)$ for almost all $\omega\in A$.

 Let $M(\Omega,X)$ be the set of all measurable sections. By symbol
 $L_0(\Omega,X)$  we denote factorization of the $M(\Omega,X)$ with respect to almost everywhere equality.
 Usually, by $\hat{u}$ we denote a class from $L_0(\Omega,X)$, containing a section $u\in M(\Omega,X)$,
 and by
 $\|\hat{u}\|$ we denote an element of $L_0(\Omega)$,
 containing $\|u(\omega)\|_{X(\omega)}$.
Let ${\mathcal L^{\infty}}(\Omega,X)=\{u\in
M(\Omega,X):\|u(\omega)\|_{X(\omega)}\in \mathcal
L^{\infty}(\Omega)\}$ and $L^{\infty}(\Omega,X)=\{\widehat{u}\in
L_0(\Omega,X): \|\widehat{u}\|\in L^{\infty}(\Omega)\}.$  One can
define the spaces $\mathcal {L^{\infty}}(\Omega,X)$ and
$L^{\infty}(\Omega,X)$ with real-valued norms $\|u\|_{\mathcal
L^{\infty}(\Omega,X)}=\sup\limits_{\omega\in
\Omega}|u(\omega)|_{X(\omega)}$ and
$\|\widehat{u}\|_{\infty}=\bigg\|\|\widehat{u}\|\bigg\|_{L^{\infty}(\Omega)},$
respectively.

 \begin{definition} Let $X,Y$ be measurable bundles of Banach spaces. A set linear operators $\{T(\omega) : X(\omega)\rightarrow
 Y(\omega)\}$ is called \textit{measurable bundle of linear operators} if
 $T(\omega)(u(\omega))$ is measurable section for any measurable
 section $u$.
\end{definition}

Let $(X,L)$ be a measurable bundle of Banach spaces. If each
$X(\omega)$ is a noncommutative $L_p$-space, i.e.
$X(\omega)=L_p(M(\omega),\tau_\omega)$, associated with finite von
Neumann algebras $M(\omega)$ and with strictly normal numerical
trace $\tau_\omega$ on
 $M(\omega)$, then  the measurable bundle  $(X,L)$ of Banach spaces is
 called \textit{ measurable bundle of noncommutative  $L_p$-spaces}.

 \begin{theorem} \label{1.2}\cite{GaC} There exists a measurable bundle $(X,L)$
 of noncommutative  $L_p$-spaces $L_p(M(\omega),\tau_\omega)$,
 such that  $L_0(\Omega,X)$ is Banach~---Kantorovich $*$-algebroid,
 which is isometrically and order  $*$-isomorph to  $L_p(M,\Phi)$. Moreover,  the isometric and
  order  $*$-isomorphism $H: L_p(M,\Phi)\rightarrow L_0(\Omega,X)$ can be chosen with the following propertires\\
  \begin{enumerate}
\item[(a)] $\Phi(x)(\omega) = \tau_\omega(H(x)(\omega))$ for all
 $x\in M$  and for almost all
 $\omega\in\Omega$;\\
 \item[(b)] $x\in M$ if and only if  $H(x)(\omega)\in
 M(\omega)$ a.e. and there exist positive number
 $\lambda>0$,
 that $\|H(x)(\omega)\|_{M(\omega)}\leq\lambda$ for almost all $\omega$;\\
 \item[(c)] $z\in Z$ if and only if
 $H(z)=(\widehat{z(\omega)\mathbf{1}_\omega)}$ for some
 $\widehat{z(\omega)}\in L_\infty(\Omega)$, where $\mathbf{1}_\omega$~---
 unit of algebra  $M(\omega)$, in particular
 $H(\mathbf{1})(\omega)=
 \mathbf{1}_\omega$ for almost all $\omega$.\\
 \item[(d)] the section $(H(x)(\omega))^*$ is measurable for all  $x\in L_p(M,\Phi)$.\\
 \item[(e)] the section $H(x)(\omega)\cdot H(y)(\omega)$ is measurable for all  $x,y\in M$.
 \end{enumerate}
 \end{theorem}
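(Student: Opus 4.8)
The plan is to obtain the bundle by a measurable central disintegration of the pair $(M,\Phi)$ and then to transport the $L_0(\Omega)$-valued $L_p$-norm fibrewise. Since $Z\cong L_\infty(\Omega,\Sigma,m)$, I would first fix a lifting $\rho$ of $L_\infty(\Omega)$ and, using it together with the reduction theory of finite von Neumann algebras over their centre, produce for $m$-almost every $\omega\in\Omega$ a finite von Neumann algebra $M(\omega)$ carrying a faithful normal numerical trace $\tau_\omega$, and a $*$-homomorphism $x\mapsto x(\omega)$ from $M$ onto a weakly dense $*$-subalgebra of $M(\omega)$ such that $\Phi(x)(\omega)=\tau_\omega(x(\omega))$ for almost all $\omega$. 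Normalising by $H(\mathbf 1)(\omega)=\mathbf 1_\omega$ and tracking scalars coming from $Z$ then yields properties (a), (b) and (c) directly from the construction, while (d) and (e) record that the adjoint and the product of such sections are again measurable, which is precisely where Gutman's lifting axioms are invoked.

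Next I would assemble the bundle of $L_p$-spaces. Put $X(\omega)=L_p(M(\omega),\tau_\omega)$ and let $L$ be the set of sections $\omega\mapsto x(\omega)$ with $x\in M$ (or a countable $\|\cdot\|_{p,\Phi}$-generating subset). To check that $(X,L)$ is a measurable bundle of Banach spaces I must verify the three axioms of the Definition: linearity of $L$ is immediate since $x\mapsto x(\omega)$ is linear; the norm function $\omega\mapsto\|x(\omega)\|_{L_p(M(\omega),\tau_\omega)}=\tau_\omega(|x(\omega)|^p)^{1/p}$ is measurable because it coincides with the $L_0(\Omega)$-representative of $\|x\|_{p,\Phi}=\widehat\Phi(|x|^p)^{1/p}$ furnished by the integration theory; and fibrewise density of $\{x(\omega):x\in M\}$ in $L_p(M(\omega),\tau_\omega)$ follows from the normality and faithfulness of $\tau_\omega$ together with the density of $M(\omega)$ in its own $L_p$-completion.

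With the bundle in hand I would define $H$ on $M$ by $H(x)=\widehat{(x(\omega))}$ and extend it by $(bo)$-continuity. The key identity $\|H(x)\|=\|x\|_{p,\Phi}$ holds in $L_0(\Omega)$ fibrewise by (a); since $M$ is $(bo)$-dense in the $(bo)$-complete Banach--Kantorovich space $L_p(M,\Phi)$ and $L_0(\Omega,X)$ is itself $(bo)$-complete, the isometry extends uniquely to all of $L_p(M,\Phi)$, remains $L_0(\Omega)$-linear, and preserves the $*$-algebroid operations and the order, because each of these is checked fibrewise on the dense part and then passed to the $(bo)$-limit, using (d)--(e) for the multiplicative and adjoint structure. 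Surjectivity of $H$ onto $L_0(\Omega,X)$ is obtained from $d$-decomposability: an arbitrary measurable section is approximated in $(bo)$ by step-sections built from the $x(\omega)$, whose preimages are glued along disjoint central projections and then completed.

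The main obstacle is the first step, the \emph{measurable} central disintegration. One has to produce the whole field $\omega\mapsto(M(\omega),\tau_\omega)$ together with the evaluation homomorphisms so that, simultaneously, (i) all relevant sections (norms, products, adjoints) are measurable, (ii) the trace identity (a) holds exactly almost everywhere, and (iii) the exceptional null sets can be drawn from a single countable family, so that the fibres are unambiguously defined a.e. Reconciling the von Neumann reduction theory with Gutman's lifting axioms, and then proving that the range of $H$ exhausts all of $L_0(\Omega,X)$ rather than a proper $(bo)$-closed submodule, is the delicate part; once this is settled, the isometric order $*$-isomorphism and properties (a)--(e) follow from the fibrewise/density argument sketched above.
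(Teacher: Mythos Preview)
The paper does not supply a proof of this theorem at all: it is quoted verbatim from \cite{GaC} and used as a black box. So there is no ``paper's own proof'' to compare against here.

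That said, the construction underlying \cite{GaC} is partly visible in the proof of the next result (Theorem~\ref{1.3}, the noncommutative vector-valued lifting), and your outline matches it well. There one normalises to $\Phi_0(x)=\Phi(x)(1+\Phi(\mathbf 1))^{-1}$, fixes a lifting $\rho$ on $L_\infty(\Omega)$, defines pointwise numerical traces $\varphi_\omega(x)=\rho(\Phi_0(x))(\omega)$, and runs the GNS construction for the bi-trace $s_\omega(x,y)=\varphi_\omega(y^*x)$: quotienting by the null ideal $I_\omega$ yields a Hilbert algebra $\Gamma_\omega$, whose completion carries the left regular representation generating $M(\omega)$ with its natural trace $\mu_\omega$; one then rescales to $\tau_\omega=(1+\Phi(\mathbf 1))(\omega)\mu_\omega$. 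This is precisely your ``measurable central disintegration via the lifting,'' phrased in GNS rather than direct-integral language, and the map $\gamma_\omega=j_\omega\circ i_\omega\circ\pi_\omega:M\to L_1(M(\omega),\tau_\omega)$ appearing there is exactly your evaluation homomorphism $x\mapsto x(\omega)$. Your identification of the delicate points---simultaneous measurability of all relevant sections, the trace identity holding off a single null set, and surjectivity of $H$ onto $L_0(\Omega,X)$---is accurate; these are handled in \cite{GaC} by the lifting machinery you invoke.
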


\section{The existence of vector-valued lifting}

In this section we establishes the existence of the lifting in a
non-commutative setting. Note that in the case of $C^*$-algebras,
the existence of the lifting has been given in \cite{GC1} (see also
\cite{GM3}).

Let $M$ be a von Neumann algebra. Then it can be identify with a
linear subspace of $L^\infty(\Omega,X)$ by the isomorphism $H$,
since if $x\in M$, then one has
$$\|H(x)\|_{L_0(\Omega,X)}=\|x\|_1=\Phi(|x|)\in L^\infty(\Omega)$$

\begin{theorem} \label{1.3} There exists a mapping  $\ell: M(\subset
L^\infty(\Omega,X))\rightarrow \mathcal{L}^\infty(\Omega,X)$ with
following properties
 \begin{enumerate}
\item[(a)]  for every $x\in M $ one has $\ell(x)\in x,\ {\rm dom}\ \ell (x)=\Omega$;\\
\item[(b)] if $x_1, x_2\in M$ and $\lambda_1, \lambda_2\in
\mathbb{R}$, then
 $\ell(\lambda_1 x_1+ \lambda_2 x_2)=\lambda_1\ell(x_1)+\lambda_2\ell(x_2)$;\\
\item[(c)]
 $\|\ell(x)(\omega)\|_{L_p(M(\omega),\tau_\omega)}=p(\|x\|_p)(\omega)$
 for all $x\in M$ and for all  $\omega\in\Omega$;\\
\item[(d)] if $x\in M,\ \lambda\in
 L^\infty(\Omega)$, then
 $\ell(ex)=p(e)\ell(x)$;\\
\item[(e)] if $x\in M$, then
 $\ell(x^*)=\ell(x)^*$;\\
\item[(f)] if $x,y\in M$, then
 $\ell(xy)=\ell(x)\ell(y)$;\\
 \item[(g)] the set $\{\ell(x)(\omega) : x\in M\}$ is dense
  in $L_p(M(\omega),\tau_\omega)$ for all
 $\omega\in\Omega$.
 \end{enumerate}
 \end{theorem}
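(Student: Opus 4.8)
The plan is to reduce every assertion to a single fixed scalar lifting and then to upgrade the almost-everywhere identities that are automatic from $H$ being a $*$-isomorphism into identities holding at every point $\omega\in\Omega$. First I would fix a lifting $p\colon L^\infty(\Omega)\to\mathcal{L}^\infty(\Omega)$ of the commutative algebra $L^\infty(\Omega)$; such a $p$ exists by the classical lifting theorem and is unital, positive, $\mathbb{R}$-linear and multiplicative, with $p(f)\in f$ and $p(f)=p(g)$ whenever $f=g$ almost everywhere. This is precisely the map occurring in (c) and (d), so it must be chosen once and for all at the start. By Theorem \ref{1.2}(c) the centre $Z$ is carried by $H$ onto the sections $z(\omega)\mathbf{1}_\omega$, so $p$ already serves as a lifting on the scalar part of the bundle; this is what will make (d) consistent with the $L^\infty(\Omega)$-module structure.

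Next, following the construction for $C^*$-algebra bundles in \cite{GC1,GM3} and Gutman's lifting axiom for the measurable Banach bundle $(X,L)$, I would produce from $p$ a section-valued map $\ell$ on the bounded measurable sections that is $\mathbb{R}$-linear, assigns to each class an everywhere-defined representative, is fibrewise norm-preserving in the sense $\|\ell(x)(\omega)\|_{X(\omega)}=p(\|x\|_p)(\omega)$, and satisfies $\ell(ex)=p(e)\ell(x)$ for $e\in L^\infty(\Omega)$. Restricting $\ell$ to $M$, viewed inside $L^\infty(\Omega,X)$ through $H$ (which is legitimate since $\|H(x)\|\in L^\infty(\Omega)$ for $x\in M$), gives properties (a), (b), (c) and (d) at once, while (g) follows from the density axiom of the bundle definition combined with the fibrewise density of $\{H(x)(\omega):x\in M\}$ in $L_p(M(\omega),\tau_\omega)$, after a null-set correction effected by $p$.

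The two genuinely new properties are the $*$-compatibility (e) and the multiplicativity (f). Since $H$ is a $*$-isomorphism, for $x,y\in M$ both $\ell(x^*)$ and $\ell(x)^*$ represent $H(x)^*$, and both $\ell(xy)$ and $\ell(x)\ell(y)$ represent $H(x)H(y)$; these are measurable sections by Theorem \ref{1.2}(d) and (e), so each identity already holds for almost every $\omega$. The task is to upgrade these almost-everywhere equalities to equalities at every $\omega$, simultaneously over all pairs from $M$. Here I would exploit that $p$ is truly multiplicative: encoding the fibrewise elements through their trace pairings $\omega\mapsto\tau_\omega(\ell(x)(\omega)^*\ell(y)(\omega))$ and lifting these scalar functions by $p$, the relations $p(fg)=p(f)p(g)$ together with the module identity (d) force the algebraic relations to propagate to every point and pin down the representatives coherently.

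The hard part will be exactly this passage from almost-everywhere to everywhere for the product relation (f), carried out uniformly in $x,y\in M$: a generic lifting respects multiplication only off a null set depending on the pair, and all these null sets must be absorbed at once. I expect to handle this by first defining $\ell$ on a weakly dense $*$-subalgebra on which multiplicativity is transparent through $p$, and then extending by the module property (d) and fibrewise continuity; the possible non-separability of $M$ is not an obstruction because the entire construction is governed by the single scalar lifting $p$ rather than by any countable exhaustion. With (e) and (f) secured, property (g) reduces to a routine stalkwise check against the density axiom of the bundle.
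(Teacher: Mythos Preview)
Your plan diverges from the paper's at the crucial point, and the divergence is where the gap lies. The paper does not begin with a generic Gutman-type vector lifting on the pre-existing bundle and then try to upgrade it to a $*$-homomorphism. Instead it \emph{rebuilds} the fibres from the scalar lifting: for each $\omega$ one composes the lifting $\rho$ with the (normalized) center-valued trace to obtain a numerical trace $\varphi_\omega(x)=\rho(\Phi_0(x))(\omega)$ on $M$, forms the Hilbert-algebra/GNS quotient $\Gamma_\omega=M/I_\omega$ with $I_\omega=\{x:\varphi_\omega(x^*x)=0\}$, and lets $M(\omega)$ be the von Neumann algebra generated by the left-regular representation. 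The map $\gamma_\omega=j_\omega\circ i_\omega\circ\pi_\omega$ is then a canonical $*$-homomorphism from $M$ into $L_1(M(\omega),\tau_\omega)$, and one simply \emph{defines} $\ell(x)(\omega)=\gamma_\omega(x)$. With this definition (e) and (f) are immediate for every $\omega$, because $\pi_\omega$ is a quotient map of $*$-algebras; properties (a)--(d) and (g) then reduce to routine calculations with $\rho$.

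Your route---first produce some section-valued lifting with (a)--(d) from Gutman's axioms and then force (e)--(f) to hold at every point---does not come with a mechanism that works. A Gutman lifting gives you, for each fixed pair $(x,y)$, the equality $\ell(xy)=\ell(x)\ell(y)$ only off a null set depending on $(x,y)$, and there is no reason these null sets can be absorbed simultaneously over an uncountable $M$. Your proposed fix (``encode via trace pairings, use $p(fg)=p(f)p(g)$, define $\ell$ on a weakly dense subalgebra and extend by fibrewise continuity'') is not a concrete procedure: you do not say which subalgebra, why multiplicativity becomes pointwise there, or how a fibrewise limit preserves the exact identity $\|\ell(x)(\omega)\|=p(\|x\|_p)(\omega)$ rather than just an a.e.\ one. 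If you make the trace-pairing idea precise---declaring that $\ell(x)(\omega)$ is the element of $L_p(M(\omega),\tau_\omega)$ determined by $\tau_\omega(\ell(x)(\omega)^*\,\cdot\,)=p(\Phi(x^*\,\cdot\,))(\omega)$---you are no longer ``upgrading'' a pre-existing lifting; you are carrying out exactly the GNS construction the paper performs, and you must then verify that this reproduces the fibres of Theorem~\ref{1.2}. In short, the hard step you flag is genuinely hard along your path, and the paper avoids it by constructing $\ell$ as a $*$-homomorphism from the outset.
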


\begin{proof} Following \cite{GaC} for every $x\in M$ we define
$$
\Phi_0(x)=\Phi(x)(1+\Phi(\id))^{-1}.
$$
One can see that $\Phi_0$ is an $L^{\infty}(\Omega)$-valued faithful
normal trace on $M$. By $\rho$ we denote the lifting on
$L^{\infty}(\Omega)$ (see \cite{G0}). Now define a finite trace
$\varphi_\omega$ on $M$ by
$\varphi_\omega(x)=\rho(\Phi_0(x))(\omega)$, where
$\omega\in\Omega$. Due to \cite[Lemma 6.4.1]{Dik} the function
$s_\omega(x,y)=\varphi_\omega(y^*x)$ is a bi-trace on $M$, and
therefore, the equality $\langle x,y\rangle_\omega=s_\omega(x,y)$
defines a quasi-inner product on $M$.

Denote $I_\omega=\{x\in M: \ s_\omega(x,x)=0\}$. It is known that
$I_\omega$ is a two-sided ideal in $M$, therefore, one considers the
quotient space $\Gamma_\omega=M/I_\omega$, by
$\pi_\omega:M\to\Gamma_\omega$ we denote the canonical mapping. The
involution and multiplication are defined on $\G_\w$ by the usual
way, i.e. $\pi_\w(x)^*=\pi_\w(x^*)$ and
$\pi_\w(x)\cdot\pi_\w(y)=\pi_\w(xy)$. According to \cite[Proposition
6.2.3]{Dik} $\G_\w$ is a Hilbert algebra. By $\ch(\w)$ we denote the
Hilbert space which is the completion of $\G_\w$; the inner product
in $\ch(\w)$ we denote by the same symbol, i.e.
$\langle\cdot,\cdot\rangle_\w$.

The mapping $\pi_\w(x)\to \pi_\w(y)\pi_\w(x)$, $x,y\in M$, can be
extended by continuity to a bounded linear operator $T_\w(y)$ on
$\ch(\w)$. It is known \cite{Dik} that $T_\w(x)$ is a representation
of $M$ in $\ch(\w)$. Let $M(\w)$ be the von Neumann algebra
generated by $T_\w(M)$, i.e. $M(\w)=T_\w(M)''$. By $\m_\w$ we denote
the natural trace on $M(\w)$ which is defined by
$\m_\w(\pi_\w(x))=\langle\pi_\w(x)\id_\w,\id_\w\rangle_\w$ for all
$\pi_\w(x)\in\G_\w$. One can see that $\m_\w$ is a faithful, normal
and finite trace on $M(\w)$ (see \cite[Proposition 6.8.3]{Dik}). Now
let us consider a non-commutative $L_1$-space $L_1(M(\w),\t_w)$,
where $\t_\w(\cdot)=(1+\Phi(\id))(\w)\m_\w(\cdot)$. By
$i_\w:\G_\w\to M(\w)$ one denotes the canonical embedding, and
$j_\w:(M(\w),\t_w)\to L_1(M(\w),\t_\w)$ denotes the natural
embedding. Then $\g_\w=j_\w\circ i_\w\circ\pi_\w$ is a linear
mapping  from $M$ to $L_1(M(\w),\t_\w)$.

Let us define
 $$\ell(x)(\omega)=\gamma_\omega(x)$$ for any $x\in M$.

(a) Since any element $x\in M$ is identified with the element
  $\widehat{\gamma_\omega(x)}$, then one has
 $\ell(x)\in x$ (see \cite{GaC}).

 (b) The linearity  of $\ell$ is obvious.

(c) Let $x\in M$. Then
\begin{eqnarray*}
\|\ell(x)(\omega)\|_{L_1(M(\omega),\tau_\omega)}&=&\|\gamma_\omega(x)\|_{L_1(M(\omega),\tau_\omega)}=\tau_\omega(|\pi_\omega(x)|)\\
&=&\tau_\omega(\pi_\omega(|x|))=\rho(\Phi(|x|))(\omega)\\
&=&\rho(\|x\|_1)(\omega)
\end{eqnarray*}
for all  $\omega\in\Omega$.

(d) Let $\chi_A\in L^\infty(\Omega)$ and $x\in M$, then
 $\chi_A\cdot x\in M$. By $\tilde{\Sigma}$ we denote a
 complete Boolean algebra of equivalent classes w.r.t. a.e. equality, of sets from
 $\Sigma$. The lifting $\rho :
 L^\infty(\Omega)\rightarrow\mathcal{ L}^\infty(\Omega)$ induces a lifting
   $\tilde{\rho} : \tilde{\Sigma}\rightarrow\Sigma$ such that $p(\chi_A)=\chi_{\tilde{p}(A)}$.
 Due to
 \begin{eqnarray*}
 \|\pi_\omega(\chi_A x)\|_{L_1(M(\omega),\tau_\omega)} &=&  \rho(\|\chi_A \cdot
 x\|_1)(\omega)=\rho(\chi_A)(\omega)\cdot
 p(\|x\|_1)(\omega)\\
 &=& \rho(\chi_A)(\omega)\cdot\|\pi_\omega(x)\|_{L_1(M(\omega),\tau_\omega)}\\
 &=&
 \chi_{\tilde{p}(A)}(\omega)\|\pi_\omega(x)\|_{L_1(M(\omega),\tau_\omega)},
 \end{eqnarray*} we obtain
 $\pi_\omega(\chi_A\cdot x)=0$, if $\omega\in\tilde{\rho}(A)$. Let
 $\omega\in\tilde{\rho}(A)$, then
\begin{eqnarray*}
 \|\pi_\omega(\chi_A\cdot
 x)-\pi_\omega(x)\|_{L_1(M(\omega),\tau_\omega)}&=& \|\pi_\omega(\chi_{\Omega\setminus
 A}\cdot x)\|_{L_1(M(\omega),\tau_\omega)}\\
 & =&\rho(\|\chi_{\Omega\setminus A}\cdot
 x\|_1)(\omega)= \rho(\chi_{\Omega\setminus A})(\omega)\cdot
 p(\|x\|_1)(\omega)\\
 &=& \chi_{\tilde{\rho}(\Omega\setminus
 A)}(\omega)\cdot \|\pi_\omega(x)\|_{L_1(M(\omega),\tau_\omega)}=0.
 \end{eqnarray*}  Therefore, we have $\pi_\omega(\chi_A \cdot x)= \chi_{\tilde{\rho}(A)}(\omega)
 \cdot\pi_\omega(x)= p(\chi_A)(\omega)\cdot\pi_\omega(x)$ for all
 $\omega\in\Omega$.

 Let $\lambda=\sum\limits_{i=1}^n r_i\chi_{A_i}\in
 L^\infty(\Omega)$ be a simple function. Then
 \begin{eqnarray*}\pi_\omega(\lambda
 x)&=& \pi_\omega\bigg(\sum\limits_{i=1}^n r_i \chi_{A_i} x\bigg)=
 \sum\limits_{i=1}^n \pi_\omega(r_i \chi_{A_i} x)\\
 &=&
 \sum\limits_{i=1}^n r_i \rho(\chi_{A_i})(\omega)\pi_\omega(x) =
\rho(\lambda)(\omega) \pi_\omega(u).
 \end{eqnarray*}

The density argument implies that for any $\lambda\in
L^\infty(\Omega)$ there exists a sequence of simple functions
  $\{\lambda_n\}$ such that
 $\|\lambda_n-\lambda\|_{L^\infty(\Omega)}\rightarrow0$ as $n\rightarrow\infty$.
 From
 \begin{eqnarray*}
 \|\pi_\omega(\lambda_n x)-\pi_\omega(\lambda
 x)\|_{L_1(M(\omega),\tau_\omega)}&=& \|\pi_\omega((\lambda_n - \lambda)x)\|_{L_1(M(\omega),\tau_\omega)} =
\rho(\|(\lambda_n-\lambda)x\|_1)(\omega)\\[2mm]
 &=& \rho(|\lambda_n -
 \lambda|)(\omega)\cdot \rho(\|x\|_1)(\omega)\\[2mm]
 &\leq&
 \|\rho(\lambda_n - \lambda)\|_{\mathcal{ L}^\infty(\Omega)}\cdot
 \|\rho(\|x\|_1)\|_{\mathcal{ L}^\infty(\Omega)}\\[2mm]
 &=&
 \|\lambda_n-\lambda\|_{L^\infty(\Omega)}\cdot \|\|x\|_1 \|_{L^\infty(\Omega)}
 \end{eqnarray*}
 one gets
 $\pi_\omega(\lambda
 x)=\lim\limits_{n\rightarrow\infty}\pi_\omega(\lambda_n x)$ for all
 $\omega\in\Omega$. So,
 $$\pi_\omega(\lambda
 x)=\lim\limits_{n\to\infty}\pi_\omega(\lambda_n x)=
 \lim\limits_{n\rightarrow\infty} \rho(\lambda_n)(\omega)\pi_\omega(x)=
\rho(\lambda)(\omega)\pi_\omega(u)$$
 for all $\omega\in\Omega$.

 Hence,
 $$i_\omega
 (\pi_\omega(\lambda x)=
 i_\omega(\rho(\lambda)(\omega)\pi_\omega(x))=
 \rho(\lambda)(\omega)i_\omega(\pi_\omega(x))= \rho(\lambda)(\omega)
 i_\omega (\pi_\omega(x))$$ and $j_\omega(\lambda x))=\rho(\lambda)(\omega)
 j_\omega (x)$ for all $\omega\in\Omega$. These mean that  $\ell(\lambda x)=\rho(\lambda)\ell(x)$
 for any $x\in M$ and $\lambda\in
 L^\infty(\Omega)$.

(e) According to  $\gamma_\omega(x^*)=\gamma_\omega(x)^*$ for any
$x\in M$ we get $\ell(x^*)=\ell(x)^*$.

 (f) From
$\gamma_\omega(xy)=\gamma_\omega(x)\gamma_\omega(y)$ for any $x,y\in
M$ it follows that  $\ell(xy)=\ell(x)\ell(y)$ for any $x,y\in M$.

(g) By the construction of $\gamma_\omega$ the set
$\{\gamma_\omega(x): x\in M\}$ is dense in
$L_1(M(\omega),\tau_\omega)$ for any $\omega\in\Omega$. Therefore,
the set $\{\ell(x)(\omega) : x\in M\}$ is dense
  in $L_1(M(\omega),\tau_\omega)$ for all
 $\omega\in\Omega$.

The proof is complete. \end{proof}

\begin{definition}
The defined map $\ell$ in Theorem \ref{1.3} is called \textit{a
noncommutative vector-valued lifting associated with the lifting
$\rho$}.
\end{definition}
%
% \begin{theorem} \label{1.4} \cite{Ga1} There exists a measurable bundle $(X,L)$
% of noncommutative  $L_p$-spaces $L_p(M(\omega),\tau_\omega)$ such
% that  $L^0(\Omega,X)$ is a Banach--Kantorovich $*$-algebroid,
% which us isometrically and order  $*$-isomorph  to $L_p(M,\Phi)$. In addition, an isometric and
%  order  $*$-isomorphism
% $H: L_p(M,\Phi)\rightarrow L^0(\Omega,X)$ can be chosen with the following properties
% \begin{enumerate}
%\item[(a)] $\Phi(x)(\omega) = \tau_\omega(H(x)(\omega))$ for all
% $x\in M$  and for almost all
% $\omega\in\Omega$.\\
%\item[(b)] $x\in M$ if and only if when  $H(x)(\omega)\in
% M(\omega)$ a.e. and there exist positive number
% $\lambda>0$,
% that $\|H(x)(\omega)\|_{M(\omega)}\leq\lambda$ for almost all $\omega$.\\
%\item[(c)] $z\in Z$ if and only if when
% $H(z)=(\widehat{z(\omega)\mathbf{1}_\omega)}$ for some
% $\widehat{z(\omega)}\in L_\infty(\Omega)$, where $\mathbf{1}_\omega$~---
% unit of algebra  $M(\omega)$, in particular
% $H(\mathbf{1})(\omega)=
% \mathbf{1}_\omega$ for almost all $\omega$.\\
%\item[(d)]Section $(H(x)(\omega))^*$ is measurable for all  $x\in L_p(M,\Phi)$.\\
%\item[(e)] Section $H(x)(\omega)\cdot H(y)(\omega)$ is measurable
%for all  $x,y\in M$.
%\end{enumerate}
% \end{theorem}

\section{Conditional expectations in $L_p(M,\Phi)$}

In this section we prove the existence of a conditional expectation
in noncommutative $L_p(M,\Phi)$-spaces associated with center-valued
traces, and give representation such operators as measurable bundle
of expectation operators in usual noncommutative $L_p$-spaces
associated with numerical traces.

Let $M$ be any finite von Neumann algebra, with unit element $\id$,
and $Z$ be some subalgebra of $Z(M)$. Then one can identify
 $Z=L_\infty(\Omega,\Sigma,m)$ and $S(Z) = L^0(\Omega,\Sigma,m)$.
 Let $\Phi$ be a strictly normal  trace on $M$ with values in  $L^0(\Omega)$, associated with  $Z$. Let
  $p\geq 1$ and $L_p(M,\Phi)$ be the noncommutative $L_p$-space, associated with von Neumann algebra  $M$ and the trace  $\Phi$.

\begin{proposition} \label{Exp1} Let $M$ and $\Phi$ be as above. Let $N$ be a von Neumann subalgebra of $M$, and the restriction $\Phi$ to $N$
also denoted by $\Phi$. Then there exists a unique map $\mathcal{E}:
M\rightarrow N$
 satisfying the following conditions:
 \begin{enumerate}
\item[(i)] $\mathcal{E}$ is a normal positive contraction from $M$
onto $N$;

\item[(ii)] $\mathcal{E}(axb)=a\mathcal{E}(x)b$ for any $x\in M$ and $a,b\in N;$

\item[(iii)] $\Phi(\mathcal{E}(x))=\Phi(x)$ for any $x\in M$.
\end{enumerate}
\end{proposition}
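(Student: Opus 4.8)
The plan is to transfer the classical existence-and-uniqueness theorem for trace-preserving conditional expectations (Umegaki--Takesaki) to the center-valued setting \emph{stalkwise}, using the measurable bundle representation of Theorem~\ref{1.2} together with the vector-valued lifting $\ell$ constructed in Theorem~\ref{1.3}. Concretely, Theorem~\ref{1.2} realizes $M$ as a field of finite von Neumann algebras $M(\w)$ equipped with faithful normal finite numerical traces $\t_\w$, via $x\mapsto (H(x)(\w))_{\w\in\Omega}$, and property (a) gives $\Phi(x)(\w)=\t_\w(H(x)(\w))$ a.e. The subalgebra $N$ I would handle by declaring $N(\w)$ to be the von Neumann algebra generated in $M(\w)$ by the liftings $\{\ell(a)(\w):a\in N\}$; appealing to a countable generating family of $N$ available in the present setting, this yields a genuine measurable field of von Neumann subalgebras, and $\t_\w|_{N(\w)}$ is again faithful, normal, and finite.

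First I would fix, for almost every $\w$, the unique $\t_\w$-preserving normal conditional expectation $\mathcal{E}_\w:M(\w)\to N(\w)$ furnished by the classical theorem, realized as the $L_2$-orthogonal projection of $M(\w)\subset L_2(M(\w),\t_\w)$ onto the closure of $N(\w)$. I would then \emph{define} the candidate operator by $\ell(\mathcal{E}(x))(\w)=\mathcal{E}_\w(\ell(x)(\w))$, using the $*$-preserving and multiplicative lifting of Theorem~\ref{1.3}(e),(f). Granting that this section is measurable and bounded, it represents an element of $N\subset M\subset L^\infty(\Omega,X)$, and the three properties drop out fiberwise: (iii) from $\Phi(\mathcal{E}(x))(\w)=\t_\w(\mathcal{E}_\w(\ell(x)(\w)))=\t_\w(\ell(x)(\w))=\Phi(x)(\w)$; the $N$-bimodule identity (ii) from $\mathcal{E}_\w(\ell(a)(\w)\ell(x)(\w)\ell(b)(\w))=\ell(a)(\w)\mathcal{E}_\w(\ell(x)(\w))\ell(b)(\w)$ for $a,b\in N$, using multiplicativity of $\ell$; and positivity and contractivity in (i) from the corresponding stalkwise properties of the $\mathcal{E}_\w$. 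Uniqueness is inherited directly from the fiberwise uniqueness of $\mathcal{E}_\w$: any map satisfying (ii) and (iii) must, after passing to stalks, coincide with $\mathcal{E}_\w$ a.e.

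The main obstacle I expect is \emph{measurability}: one must show that $\w\mapsto\mathcal{E}_\w(\ell(x)(\w))$ is a measurable section of the bundle for each $x\in M$, i.e. that $\{\mathcal{E}_\w\}$ forms a measurable bundle of operators in the sense of the relevant definition. The natural route is to approximate by step-sections built from the fixed countable generating set of $N$, using the lifting identities $\ell(\lambda a)=\rho(\lambda)\ell(a)$ and $\ell(xy)=\ell(x)\ell(y)$ to keep the projections compatible across stalks. A secondary point needing care is \emph{normality} in (i): contractivity already yields $(bo)$-continuity, but to conclude $\Phi$-normality of the global $\mathcal{E}$ one must assemble the $\w$-wise monotone convergence of the $\mathcal{E}_\w$ with property (a) of Theorem~\ref{1.2}. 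Once measurability is secured, the remaining verifications are routine stalkwise translations of the classical facts.
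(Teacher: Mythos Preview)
Your approach is genuinely different from the paper's and considerably more laborious. The paper does \emph{not} go through the bundle representation for this proposition; instead it observes that if $\nu$ is any faithful normal finite trace on $Z\cong L^\infty(\Omega)$ (e.g.\ integration against $\mu$), then $\tau:=\nu\circ\Phi$ is a faithful normal numerical trace on $M$, so the classical Umegaki--Takesaki result immediately supplies a unique normal positive $N$-bimodule projection $\mathcal{E}:M\to N$ with $\tau\circ\mathcal{E}=\tau$. Properties (i) and (ii) are then free; the only remaining work is to upgrade $\tau$-invariance to $\Phi$-invariance, which the paper dispatches in one line via polar decomposition: writing $\Phi(\mathcal{E}(x))-\Phi(x)=v\,|\Phi(\mathcal{E}(x))-\Phi(x)|$ with a partial isometry $v\in L^\infty(\Omega)\subset N$, the $Z$-linearity of $\Phi$ and the bimodule property give $\nu(|\Phi(\mathcal{E}(x))-\Phi(x)|)=\nu\big(\Phi(\mathcal{E}(vx))-\Phi(vx)\big)=0$, and faithfulness of $\nu$ finishes.

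Your stalkwise construction is essentially the content of the paper's \emph{next} result (Theorem~\ref{2.3}), but run in reverse: there the authors first build $\mathcal{E}$ globally by the trick above, and only afterwards disintegrate it into a measurable bundle $\{\mathcal{E}_\omega\}$. Going in your direction, the measurability obstacle you flag is real and is not settled by the sketch; you also invoke a ``countable generating family'' of $N$ that is not among the standing hypotheses; and the formula ``$\ell(\mathcal{E}(x))(\omega)=\mathcal{E}_\omega(\ell(x)(\omega))$'' presupposes that the section on the right lies in the range of the lifting $\ell$ (equivalently, that it comes from some element of $N$), which is precisely what must be shown. None of these obstacles is necessarily fatal, but the paper's composition $\nu\circ\Phi$ sidesteps all of them in a single stroke.
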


\begin{proof}  Without loss of generality, we may assume $\Phi(x)\in L^\infty(\Omega)$ (otherwise, we put $\Phi_0(x) =
 \Phi(x)(\id + \Phi(\id))^{-1}$.) Let $\nu$ be a faithful normal trace on $L^\infty(\Omega)$. Then $\tau(x)=\nu(\Phi(x))$
 is also a faithful normal numerical  trace on $M$. Hence, we can define a faithful normal numerical  trace $\tau$ on $M$. Then the existence of $\mathcal{E}$
 and properties (i) and (ii) follow from \cite[Proposition 2.1]{P}.

 Let us show (iii). Let $x\in M$. Then by Proposition 2.1. (iii) \cite{P} we
  get $\tau(\mathcal{E}(x))=\tau(x).$ By definition of $\tau$ one has
  $\nu(\Phi((\mathcal{E}(x))=\nu(\Phi(x))$ for any $x\in M$.

  Let
  $\Phi((\mathcal{E}(x))-\Phi(x)=v|\Phi((\mathcal{E}(x))-\Phi(x)|$
  be a polar decomposition of $\Phi((\mathcal{E}(x))-\Phi(x)$,
  for some partial isometry $v\in L^\infty(\Omega)$. Then one finds
  $$0=\nu(\Phi((\mathcal{E}(vx))-\nu(\Phi(vx))=\nu(v(\Phi((\mathcal{E}(x))-\Phi(x))=\nu(|\Phi(\mathcal{E}(x))-\Phi(x)|).$$

The faithfulness of $\nu$ implies that
$\Phi(\mathcal{E}(x))=\Phi(x)$ for any $x\in M$. Similarly, we can
show $\Phi(\mathcal{E}(x)y)=\Phi(xy)$ for any $x\in M, y\in N$.
 \end{proof}

\begin{theorem}\label{Exp2} The operator $\mathcal{E}$ (from Proposition \ref{Exp1}) can be extended to a positive $L_p(M,\Phi)$--contraction
from $L_p(M,\Phi)$ onto $L_p(N,\Phi)$, $1\leq p<\infty$.
Moreover, if $a,b\in N$ and $x\in L_p(M,\Phi)$ then
$$\mathcal{E}(axb)=a\mathcal{E}(x)b.$$
\end{theorem}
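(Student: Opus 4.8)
The plan is to obtain the extension by $(bo)$-continuity, the heart of the matter being the order estimate $\|\mathcal{E}(x)\|_{p,\Phi}\le\|x\|_{p,\Phi}$ in $L^0(\Omega)$ for every $x\in M$. To get this I would first reduce to the numerical case already used in the proof of Proposition \ref{Exp1}. Fix a faithful normal trace $\nu$ on $Z=L^\infty(\Omega)$ and, after the normalization employed there (so that the resulting trace is finite on $M$), set $\tau=\nu\circ\Phi_0$, a faithful normal numerical trace on $M$ for which $\mathcal{E}$ is the $\tau$-preserving conditional expectation onto $N$. By the classical theory of conditional expectations on noncommutative $L_p$-spaces associated with a numerical trace (\cite{P}), $\mathcal{E}$ is an $L_p(M,\tau)$-contraction, i.e. $\tau(|\mathcal{E}(x)|^p)\le\tau(|x|^p)$. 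Since $\|y\|_{p,\Phi_0}^p=\widehat{\Phi_0}(|y|^p)$ and $\tau(|y|^p)=\nu(\|y\|_{p,\Phi_0}^p)$, this only yields the integrated form $\nu(\|\mathcal{E}(x)\|_{p,\Phi_0}^p)\le\nu(\|x\|_{p,\Phi_0}^p)$.

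To pass from this integrated inequality to the genuine pointwise (order) inequality in $L^0(\Omega)$ I would localize by central projections. For $A\in\Sigma$ the function $\chi_A$ lies in $Z\subseteq N$, so property (ii) of Proposition \ref{Exp1} gives $\mathcal{E}(\chi_A x)=\chi_A\mathcal{E}(x)$; hence $\|\mathcal{E}(\chi_A x)\|_{p,\Phi_0}^p=\chi_A\|\mathcal{E}(x)\|_{p,\Phi_0}^p$ and likewise for $x$. Applying the integrated inequality to $\chi_A x$ yields
$$\int_A\|\mathcal{E}(x)\|_{p,\Phi_0}^p\,d\nu\le\int_A\|x\|_{p,\Phi_0}^p\,d\nu$$
for every $A\in\Sigma$, whence $\|\mathcal{E}(x)\|_{p,\Phi_0}\le\|x\|_{p,\Phi_0}$ $\nu$-a.e., i.e. in the order of $L^0(\Omega)$. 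Since $\Phi_0$ and $\Phi$ differ by the positive invertible central factor $\id+\Phi(\id)$, the same order inequality holds for $\Phi$. (For $p=1$ one can argue directly from Proposition \ref{1.1}(i): combining (ii) and (iii) of Proposition \ref{Exp1} gives $\Phi(\mathcal{E}(x)y)=\Phi(x\mathcal{E}(y))$, so $|\Phi(\mathcal{E}(x)y)|\le\|x\|_{1,\Phi}\|\mathcal{E}(y)\|_M\le\|x\|_{1,\Phi}\|y\|_M$, and taking the supremum over $\|y\|_M\le1$ gives $\|\mathcal{E}(x)\|_{1,\Phi}\le\|x\|_{1,\Phi}$.)

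With the contraction estimate in hand the extension is routine. Since $M$ is $(bo)$-dense in $L_p(M,\Phi)$, for $x\in L_p(M,\Phi)$ choose $x_n\in M$ with $\|x_n-x\|_{p,\Phi}\to 0$ in $(o)$-convergence; the estimate shows $\{\mathcal{E}(x_n)\}$ is $(bo)$-fundamental, and I set $\mathcal{E}(x)=(bo)$-$\lim\mathcal{E}(x_n)$, which is independent of the approximating sequence and again satisfies $\|\mathcal{E}(x)\|_{p,\Phi}\le\|x\|_{p,\Phi}$. Positivity passes to the limit because the positive cone is $(bo)$-closed while $M_+$ is $(bo)$-dense in $L_p(M,\Phi)_+$; since $\mathcal{E}$ fixes $N$ and hence all of $L_p(N,\Phi)$, while $\mathcal{E}(L_p(M,\Phi))\subseteq L_p(N,\Phi)$ by continuity, the map is onto $L_p(N,\Phi)$. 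Finally, for $a,b\in N$ the bound $\|ax_nb-axb\|_{p,\Phi}\le\|a\|_M\|b\|_M\|x_n-x\|_{p,\Phi}$ shows $ax_nb\to axb$, so passing to the limit in $\mathcal{E}(ax_nb)=a\mathcal{E}(x_n)b$ yields the module identity $\mathcal{E}(axb)=a\mathcal{E}(x)b$.

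I expect the main obstacle to be the first part, namely the transition from the numerical ($\nu$-integrated) contraction to the genuine $L^0(\Omega)$-valued order contraction. The localization by central projections is what makes this work, and it relies crucially on $Z\subseteq N$ together with the module property (ii); one must also keep track of the central weight $\id+\Phi(\id)$ introduced by the normalization, verifying that it cancels in the order inequality because it is positive and invertible.
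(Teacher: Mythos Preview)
Your argument is correct, and for the extension, positivity, surjectivity, and the bimodule identity it is essentially identical to the paper's. The only genuine difference is in how you obtain the order contraction $\|\mathcal{E}(x)\|_{p,\Phi}\le\|x\|_{p,\Phi}$ for $p>1$.

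The paper does this in one line using the duality formula of Proposition~\ref{1.1}(ii). For $x\in M$ and $y\in N$ with $\|y\|_{q,\Phi}\le\mathbf{1}$, Proposition~\ref{Exp1} gives $\Phi(\mathcal{E}(x)y)=\Phi(xy)$, so $|\Phi(\mathcal{E}(x)y)|\le\|y\|_{q,\Phi}\|x\|_{p,\Phi}\le\|x\|_{p,\Phi}$; taking the supremum over such $y$ yields $\|\mathcal{E}(x)\|_{p,\Phi}\le\|x\|_{p,\Phi}$ directly, with no detour through a numerical trace. Your route---lift the numerical $L_p$-contraction of $\mathcal{E}$ with respect to $\tau=\nu\circ\Phi_0$ to an $L^0(\Omega)$-order inequality by localizing with central projections $\chi_A$---is sound and is a useful general-purpose device: it would work even if the duality characterization of $\|\cdot\|_{p,\Phi}$ were not already available. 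The trade-off is that it is longer and leans explicitly on $Z\subseteq N$ (so that $\chi_A\in N$ and property~(ii) applies); note that the paper itself uses this inclusion implicitly, already in the proof of Proposition~\ref{Exp1}(iii). Your $p=1$ argument via Proposition~\ref{1.1}(i) is exactly the paper's.
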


\begin{proof} Let $x\in M, y\in N$. Then $|\Phi(\mathcal{\mathcal{E}}(x)y)|=|\Phi(xy)|\leq \Phi(|xy|)
\leq \|y\|_N\|x\|_1$. Hence, according Proposition \ref{1.1} (i) we
have
$$\|\mathcal{E}(x)\|_1=\sup\{|\Phi(\mathcal{E}(x)y)|: \|y\|_M\leq
1\}\leq \|x\|_1.$$

 Let $p>1$. By Proposition \ref{1.1} (ii) one gets
$$|\Phi(\mathcal{\mathcal{E}}(x)y)|=|\Phi(xy)|\leq \Phi(|xy|)\leq \|y\|_q\|x\|_p,$$
where $\frac{1}{p}+\frac{1}{q}=1.$ Then one finds
$$\|\mathcal{E}(x)\|_p=\sup\{|\Phi(\mathcal{E}(x)y)|: \|y\|_q\leq
\mathbf{1}\}\leq\|x\|_p.$$

Hence  $\|\mathcal{E}(x)\|_p\leq\|x\|_p$, for any $x\in M$, i.e.
$\mathcal{E}$ is an $L_p(M,\Phi)$--contraction. Since $M$ is
$(bo)$-dense in $L_p(M,\Phi)$, so we can continue $\mathcal{E}$
until $L_p(M,\Phi)$--contraction to $L_p(M,\Phi)$, which is denoted
by the same symbol $\mathcal{E}$ (i.e.  $\mathcal{E}(x)=(bo) -
\lim\limits_{n\rightarrow\infty}\mathcal{E}(x_n)$, if $ x_n\in M$
 and $\|x_n-x\|_p\stackrel{(o)}\rightarrow0.$) It is clear that
 $\mathcal{E}(\id)=\id$ and $\|\mathcal{E}\|=\mathbf{1}$.

Let us show $\mathcal{E}$ is positive. Let $0\leq x\in L_p(M,\Phi)$.
We will chouse $0\leq x_n\in M$ such that $x_n$ is increasing and
$\|x_n-x\|_p\stackrel{(o)}\rightarrow0.$ Then the sequence
$\{\mathcal{E}(x_n)\}$ is increasing as well, and one has
$\mathcal{E}(x)=(bo) -
\lim\limits_{n\rightarrow\infty}\mathcal{E}(x_n)$. Since
$\mathcal{E}(x_n)\geq0$ and thw cone $K$ of positive elements of
$L_p(M,\Phi)$ is monotone closed (see\cite{GaC}), we get
$\mathcal{E}(x)\geq 0.$

Let $y\in L_p(N,\Phi)$ and $ y_n\in N$
 and $\|y_n-y\|_p\stackrel{(o)}\rightarrow0.$ Then
 $\mathcal{E}(y)=(bo) -
\lim\limits_{n\rightarrow\infty}\mathcal{E}(y_n)=(bo) -
\lim\limits_{n\rightarrow\infty}y_n=y$, i.e. $\mathcal{E}$ is
onto.

Now let $a,b\in N$,  $x\in L_p(M,\Phi)$ and $x_n\in M$ which
$\|x_n-x\|_p\stackrel{(o)}\rightarrow0.$ Then
\begin{eqnarray*}
\mathcal{E}(axb)&=&(bo) -
\lim\limits_{n\rightarrow\infty}\mathcal{E}(ax_nb)=(bo) -
\lim\limits_{n\rightarrow\infty}a\mathcal{E}(x_n)b\\[2mm]
&=&a((bo) -
\lim\limits_{n\rightarrow\infty}\mathcal{E}(x_n))b=a\mathcal{E}(x)b.
\end{eqnarray*}
 \end{proof}

\begin{definition}
The defined operator $\mathcal{E}: L_p(M,\Phi)\rightarrow
L_p(N,\Phi)$ in Theorem \ref{Exp2} is called a \textit{conditional
expectation}.
\end{definition}

 \begin{theorem}\label{2.3} Let $\mathcal{E}: L_p(M,\Phi)\rightarrow L_p(N,\Phi)$
 be the conditional expectation.
 Then there exists a measurable bundle of conditional expectations $\mathcal{E}_\omega: L_p(M(\omega),\tau_\omega)\rightarrow L_p(N(\omega),\tau_\omega)$
 such that
 $$\mathcal{E}(x)(\omega)= \mathcal{E}_\omega(x(\omega))$$ for all
$x\in L_p(M,\Phi)$ and for almost all $\omega\in\Omega$.
  \end{theorem}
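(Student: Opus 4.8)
The plan is to build the fiberwise maps $\mathcal{E}_\omega$ directly from the global operator $\mathcal{E}$ by means of the vector-valued lifting $\ell$ of Theorem \ref{1.3}, and then to verify that each $\mathcal{E}_\omega$ satisfies the defining properties of a conditional expectation in the fiber. Since by Theorem \ref{1.3}(g) the set $\{\ell(x)(\omega):x\in M\}$ is dense in $L_p(M(\omega),\tau_\omega)$ for every $\omega$, I would first \emph{define}
$$\mathcal{E}_\omega(\ell(x)(\omega))=\ell(\mathcal{E}(x))(\omega),\qquad x\in M,$$
on this dense subspace and then extend by continuity. To see this is well defined and contractive, suppose $x,x'\in M$ with $\ell(x)(\omega)=\ell(x')(\omega)$; by Theorem \ref{1.3}(c) this means $\rho(\|x-x'\|_p)(\omega)=0$. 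Since $\mathcal{E}$ is a contraction (Theorem \ref{Exp2}), one has $\|\mathcal{E}(x-x')\|_p\le\|x-x'\|_p$ for the $L_0(\Omega)$-valued norm, and the monotonicity of the lifting $\rho$ yields $\rho(\|\mathcal{E}(x-x')\|_p)(\omega)\le\rho(\|x-x'\|_p)(\omega)=0$, i.e. $\ell(\mathcal{E}(x))(\omega)=\ell(\mathcal{E}(x'))(\omega)$. The same estimate shows $\|\mathcal{E}_\omega(\xi)\|_p\le\|\xi\|_p$ on the dense subspace, so $\mathcal{E}_\omega$ extends uniquely to a contraction on $L_p(M(\omega),\tau_\omega)$.

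Next I would identify the target and the algebraic structure. Applying the bundle construction of Theorem \ref{1.2} to the pair $(N,\Phi|_N)$ produces a measurable sub-bundle with fibers $L_p(N(\omega),\tau_\omega)$, where $N(\omega)$ is the von Neumann subalgebra of $M(\omega)$ generated by $\{\ell(y)(\omega):y\in N\}$, and these vectors are dense in $L_p(N(\omega),\tau_\omega)$. Since $\mathcal{E}(x)\in N$ for $x\in M$, the operator $\mathcal{E}_\omega$ maps into $L_p(N(\omega),\tau_\omega)$, and since $\mathcal{E}(y)=y$ for $y\in N$ it restricts to the identity there. Using the multiplicativity and $*$-compatibility of $\ell$ (Theorem \ref{1.3}(e),(f)) together with the module property $\mathcal{E}(ayb)=a\mathcal{E}(y)b$ from Proposition \ref{Exp1}(ii) and Theorem \ref{Exp2}, I would obtain $\mathcal{E}_\omega(a\xi b)=a\mathcal{E}_\omega(\xi)b$ for $a,b\in N(\omega)$; positivity of $\mathcal{E}_\omega$ follows from positivity of $\mathcal{E}$ since $\ell(x^*x)(\omega)=\ell(x)(\omega)^*\ell(x)(\omega)$. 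Finally, Theorem \ref{1.2}(a) gives $\tau_\omega(\mathcal{E}_\omega(\ell(x)(\omega)))=\Phi(\mathcal{E}(x))(\omega)=\Phi(x)(\omega)=\tau_\omega(\ell(x)(\omega))$, so $\mathcal{E}_\omega$ is trace preserving. By the uniqueness part of the numerical characterization (Proposition \ref{Exp1} read inside the fiber $M(\omega)$), $\mathcal{E}_\omega$ is precisely the conditional expectation onto $L_p(N(\omega),\tau_\omega)$.

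It then remains to verify measurability of the bundle $\{\mathcal{E}_\omega\}$ and the pointwise identity. For $x\in M$ the section $\omega\mapsto\mathcal{E}_\omega(\ell(x)(\omega))=\ell(\mathcal{E}(x))(\omega)$ is measurable, being the lifting of $\mathcal{E}(x)\in N\subset M$; since every measurable section is an a.e. limit of step-sections assembled from $\{\ell(x):x\in M\}$, the fiberwise contraction estimate lets me pass to the limit and conclude that $\omega\mapsto\mathcal{E}_\omega(u(\omega))$ is measurable for every measurable section $u$, i.e. $\{\mathcal{E}_\omega\}$ is a measurable bundle of operators. The identity $\mathcal{E}(x)(\omega)=\mathcal{E}_\omega(x(\omega))$ holds for $x\in M$ because $\ell(x)\in H(x)$ gives $x(\omega)=\ell(x)(\omega)$ a.e.; for general $x\in L_p(M,\Phi)$ I would choose $x_n\in M$ with $\|x_n-x\|_p\stackrel{(o)}{\rightarrow}0$, pass to a subsequence so that $x_n(\omega)\to x(\omega)$ in $L_p(M(\omega),\tau_\omega)$ for almost every $\omega$, and use the fiberwise contraction of $\mathcal{E}_\omega$ together with $\mathcal{E}(x_n)\to\mathcal{E}(x)$ in $(bo)$ to match the two limits almost everywhere.

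The main obstacle will be the transition from $M$, where the lifting furnishes honest everywhere-defined fiber representatives and the identity is exact, to all of $L_p(M,\Phi)$, where only a.e. representatives are available: the argument hinges on the uniform (fiberwise) contractivity of the $\mathcal{E}_\omega$ and on the density statement of Theorem \ref{1.3}(g), which together guarantee both the measurability of the resulting bundle and the stability of the identity under $(bo)$-limits.
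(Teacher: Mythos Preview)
Your proposal is correct and follows essentially the same route as the paper's proof: define $\mathcal{E}_\omega$ on the dense image of the lifting by $\mathcal{E}_\omega(\ell(x)(\omega))=\ell(\mathcal{E}(x))(\omega)$, use Theorem~\ref{1.3}(c) and the contractivity of $\mathcal{E}$ to show well-definedness and contractivity, extend by density, verify the three defining properties of a conditional expectation fiberwise (surjectivity onto $N(\omega)$, the bimodule identity, trace preservation), and then pass from $M$ to all of $L_p(M,\Phi)$ by $(bo)$-density. The only cosmetic differences are that the paper establishes positivity via the identity $|\ell(x)|=\ell(|x|)$ rather than $\ell(x^*x)=\ell(x)^*\ell(x)$, and that the $(o)$-convergence of $\|x_n-x\|_p$ already gives a.e.\ convergence of $x_n(\omega)$ without passing to a subsequence.
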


   \begin{proof}
Let $\ell: M\rightarrow \mathcal{L}^\infty(\Omega,X)$ be the
noncommutative vector-valued lifting associated with lifting $\rho$.
 Since $\mathcal{E}(M)\subset N$ can define a linear operator
 $\varphi_\omega: \{\ell(x)(\omega): x\in M\}\rightarrow
 L_p(N(\omega),\tau_\omega)$ by
 $$\varphi_\omega(\ell(x)(\omega))=\ell(\mathcal{E}(x))(\omega).$$
 From the relations
\begin{eqnarray*}
\|\varphi_\omega(\ell(x)(\omega))\|_{L_p(N(\omega),\tau_\omega)}&=&
 \|\ell(\mathcal{E}(x))(\omega)\|_{L_p(N(\omega),\tau_\omega)}=\rho(\|\mathcal{E}(x)\|_{p})(\omega)\\[2mm]
 &\leq&\rho(\|x\|_p)(\omega)=\|\ell(x)(\omega)\|_{L_p(M(\omega),\tau_\omega)}
 \end{eqnarray*}
 follow that $\varphi_\omega$ is a well-defined contraction.

 Let $\ell(x)(\omega)\geq0$. From the equality
 $|\ell(x)|=\ell(|x|)$ we get
 $\ell(x)(\omega)=\ell(|x|)(\omega)$, which yields
 $$\varphi_\omega(\ell(x)(\omega))=\varphi_\omega(\ell(|x|)(\omega))=\ell(\mathcal{E}(|x|))(\omega)\geq0,$$
  i.e. $\varphi_\omega$ is positive.

 The density of $\{\ell(x)(\omega): x\in M\}$ in
  $L_p(M(\omega),\tau_\omega)$ implies that we can continue $\varphi_\omega$
  until a linear positive contraction $T_\omega: L_p(M(\omega),\tau_\omega)\rightarrow
  L_p(N(\omega),\tau_\omega)$ by
  $$T_\omega(x(\omega))=\lim\limits_{n\rightarrow\infty}\varphi_\omega(\ell(x_n)(\omega))=\lim\limits_{n\rightarrow\infty}\ell(\mathcal{E}(x_n))(\omega).$$

  Let $(X,L)$ be a measurable bundle of the noncommutative
  $L_p(M(\omega),\tau_\omega)$-spaces and $(Y,L)$ be a measurable bundle of the noncommutative
  $L_p(N(\omega),\tau_\omega)$-spaces.
Since $\varphi_\omega(\ell(x)(\omega))\in
\mathcal{L}^\infty(\Omega,Y)$, for any $x\in M$, we obtain that
$T_\omega(x(\omega))\in M(\Omega, Y)$ for any $x\in M(\Omega, X)$.
Therefore, $\{T_\omega\}$ is a measurable bundle of positive
contractions.

It is clear that
$\|T_\omega\|_{L_p(M(\omega),\tau_\omega)\rightarrow
L_p(N(\omega),\tau_\omega)}=\|\mathcal{E}\|(\omega)$. From
$\|\mathcal{E}\|=\mathbf{1}$, one has
$\|T_\omega\|_{L_p(M(\omega),\tau_\omega)\rightarrow
L_p(N(\omega),\tau_\omega)}=1$.

Let us show that $T_\omega$ is a conditional expectation from
$M(\omega)$ onto $N(\omega)$.

(i) Let $x(\omega)\in M(\omega)_{sa}(=\{x(\omega)\in M(\omega):
x(\omega)^*=x(\omega)\}).$ Then $|x(\omega)|\leq
\|x(\omega)\|_{M(\omega)}\id(\omega)$ and
\begin{eqnarray*}
|T_\omega(x(\omega))|&\leq&
T_\omega(|x(\omega)|)\leq\|x(\omega)\|_{M(\omega)}T_\omega(\id(\omega))\\[2mm]
&=&\|x(\omega)\|_{M(\omega)}\id(\omega),
\end{eqnarray*}
which means  $T_\omega(M(\omega))\subset N(\omega)$ and $T_\omega$
is  contraction from $M(\omega)$ to $N(\omega)$, here
$\id(\omega)=\ell(\id)(\omega)$.

Let $y(\omega)\in N(\omega).$ Then there exists $y_n\in N$ such that
$y(\omega)=\lim\limits_{n\rightarrow\infty}\ell(y_n)(\omega).$ Due
to  $\mathcal{E}(y_n)=y_n$ we get
\begin{eqnarray*}
T_\omega(y(\omega))&=&\lim\limits_{n\rightarrow\infty}T_\omega(\ell(y_n)(\omega))=
\lim\limits_{n\rightarrow\infty}\ell(\mathcal{E}(y_n))(\omega)\\[2mm]
&=&\lim\limits_{n\rightarrow\infty}\ell(y_n)(\omega))=y(\omega).
\end{eqnarray*}
This means $T_\omega(M(\omega))=N(\omega)$, i.e. $T_\omega$ maps
$M(\omega)$ onto $N(\omega).$

(ii) Let $a(\omega), b(\omega)\in N(\omega)$ and $x(\omega)\in
M(\omega)$. Then
$$T_\omega
(a(\omega)x(\omega)b(\omega))=\lim\limits_{n\rightarrow\infty}\ell(\mathcal{E}(a_nx_nb_n))(\omega)$$
for some $a_n, b_n\in N$ and $x_n\in M$. Due to
$\mathcal{E}(a_nx_nb_n))=a_n\mathcal{E}(x_n)b_n$ we find
\begin{eqnarray*}
T_\omega
(a(\omega)x(\omega)b(\omega))&=&\lim\limits_{n\rightarrow\infty}\ell(a_n)(\omega)\ell(\mathcal{E}(x_n))(\omega)\ell(b_n)(\omega)\\[2mm]
&=&a(\omega)T_\omega (x(\omega))b(\omega).
\end{eqnarray*}

(iii) Let $x(\omega)\in M(\omega)$. Then
$$\tau_\omega(T_\omega
(x(\omega))=\tau_\omega(\lim\limits_{n\rightarrow\infty}\ell(\mathcal{E}(x_n)(\omega)=\lim\limits_{n\rightarrow\infty}p(\Phi(\mathcal{E}(x_n))(\omega)$$
for some $x_n\in M$. From $\Phi(\mathcal{E}(x_n))=\Phi(x_n)$ one
gets $$\tau_\omega(T_\omega
(x(\omega))=\lim\limits_{n\rightarrow\infty}p(\Phi(x_n)(\omega)=\lim\limits_{n\rightarrow\infty}
\tau_\omega(\ell(x_n)(\omega))=\tau_\omega(x(\omega)).$$

Hence, $T_\omega$ is a conditional expectation from $M(\omega)$ onto
$N(\omega)$. So, it can be extended to $T_\omega:
L_p(M(\omega),\tau_\omega)\rightarrow L_p(N(\omega),\tau_\omega)$,
which is a conditional expectation from $L_p(M(\omega),\tau_\omega)$
onto $L_p(N(\omega),\tau_\omega)$, i.e.
$T_\omega=\mathcal{E}_\omega.$

 We show that $\mathcal{E}(x)(\omega)= \mathcal{E}_\omega(x(\omega))$ for all
$x\in L_p(M,\Phi)$ and for almost all $\omega\in\Omega$. It is
clear that  $\mathcal{E}_\omega x(\omega) =
 \mathcal{E}(x)(\omega)$ for $x\in M$ and all  $\omega\in\Omega$.

 Let $x\in L_p(M,\Phi)$. As
 $M$ $(bo)$-dense in $L_p(M,\Phi)$, then there exists a sequence
 $\{x_n\}\subset M$ such that $\|x_n-x\|_p\stackrel{(o)}\rightarrow0.$ Then  $\|x_n(\omega) -
 x(\omega)\|_{L_p(M(\omega),\tau_\omega)} \rightarrow0$ for almost all $\omega\in \Omega$.
 From the equality  $\mathcal{E}(x)=(bo) -
\lim\limits_{n\rightarrow\infty}\mathcal{E}(x_n)$ we get
$$\|\mathcal{E}_\omega(x_n(\omega)) -
 (\mathcal{E}x)(\omega)\|_{L_p(M(\omega),\tau_\omega)} \rightarrow0$$ for almost all $\omega\in \Omega$, i.e.
 $\lim\limits_{n\rightarrow\infty}\mathcal{E}_\omega(x_n(\omega))
 =
 (\mathcal{E}x)(\omega)$ for almost all $\omega\in \Omega$.

 On the other hand, from the continuity of $\mathcal{E}_\omega$
we obtain $\lim\limits_n \mathcal{E}_\omega (x_n(\omega)) =
\mathcal{E}_\omega
 x(\omega)$ a.e. Hence $\mathcal{E}(x)(\omega)= \mathcal{E}_\omega(x(\omega))$ for all
$x\in L_p(M,\Phi)$ and for almost all $\omega\in\Omega$. This
completes the proof.
\end{proof}

\section{Martingales theorems in noncommutative $L_p(M,\Phi)$ spaces}

In this section, as an application of the results of the previous
section, we are going to prove the norm-convergence of martingales
in the noncommutative $L_p(M,\Phi)$ spaces.

\begin{definition}\label{3.1}\cite{P} Let $\{M_n\}_{n\geq1}$ be a sequence of von Neumann subalgebras of $M$ such that $M_1\subset M_2\subset
\dots$. If one has $\overline{\bigcup\limits_{n}M_n}^{w^*}=M$ then
the sequence $\{M_n\}_{n\geq1}$ is called a \textit{filtration} of
$M$.
\end{definition}

Let $M$ be as in Section 4, and $\{M_n\}$ be a filtration of $M$.
Let $\mathcal{E}(\cdot|M_n)$ be a conditional expectation from
$L_p(M,\Phi)$ to $L_p(M_n,\Phi)$.

\begin{definition}\label{3.2}A sequence $\{x_n\}\subset
L_p(M,\Phi)$ is called an \textit{$L_p(M,\Phi)$-martingale} with
respect to $\{M_n\}_{n\geq1}$ if one has
$\mathcal{E}(x_{n+1}|M_n)=x_n$ for every $n\geq 1$.
\end{definition}

\begin{proposition}\label{3.3} If  $\{x_n\}\subset
L_1(M,\Phi)$ is  an $L_1(M,\Phi)$-martingale with respect to
$\{M_n\}_{n\geq1}$ then $\{x_n(\omega)\}\subset
L_1(M(\omega),\tau_\omega)$ is also martingale with respect to
filtration $\{M_n(\omega)\}_{n\geq1}$ of
$M_\infty(\omega):=\overline{\bigcup\limits_{n}M_n(\omega)}^{w^*}$
for almost all $\omega\in \Omega$.
\end{proposition}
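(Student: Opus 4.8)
The plan is to transfer the martingale identity fibrewise by applying Theorem \ref{2.3} to each conditional expectation occurring in the filtration. For every $n\geq 1$, apply Theorem \ref{2.3} to $\mathcal{E}(\cdot|M_n):L_1(M,\Phi)\to L_1(M_n,\Phi)$: it yields a measurable bundle of conditional expectations $\{\mathcal{E}^{(n)}_\omega\}$, where $\mathcal{E}^{(n)}_\omega:L_1(M(\omega),\tau_\omega)\to L_1(M_n(\omega),\tau_\omega)$, such that
$$
\mathcal{E}(y|M_n)(\omega)=\mathcal{E}^{(n)}_\omega(y(\omega))
$$
for every $y\in L_1(M,\Phi)$ and almost all $\omega\in\Omega$. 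Since $\{x_n\}$ is an $L_1(M,\Phi)$-martingale, one has $\mathcal{E}(x_{n+1}|M_n)=x_n$, and in particular $x_n=\mathcal{E}(x_{n+1}|M_n)\in L_1(M_n,\Phi)$, so every $x_n$ is adapted.

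First I would fix $n$ and evaluate the identity $\mathcal{E}(x_{n+1}|M_n)=x_n$ at the fibre $\omega$ by means of the representation above, obtaining
$$
x_n(\omega)=\mathcal{E}(x_{n+1}|M_n)(\omega)=\mathcal{E}^{(n)}_\omega(x_{n+1}(\omega))
$$
for all $\omega$ outside some $\mu$-null set $E_n$. Taking the countable union $E=\bigcup_{n\geq 1}E_n$, which is again $\mu$-null, I get that for every $\omega\notin E$ the relation $x_n(\omega)=\mathcal{E}^{(n)}_\omega(x_{n+1}(\omega))$ holds simultaneously for all $n\geq 1$. Since $\mathcal{E}^{(n)}_\omega$ takes values in $L_1(M_n(\omega),\tau_\omega)$, this also shows $x_n(\omega)\in L_1(M_n(\omega),\tau_\omega)$ a.e.; moreover the inclusions $M_n\subset M_{n+1}$ carry over under the representation to $M_n(\omega)\subset M_{n+1}(\omega)$, so that $\{M_n(\omega)\}_{n\geq 1}$ is an increasing filtration of $M_\infty(\omega)=\overline{\bigcup_n M_n(\omega)}^{w^*}$ with each $x_n(\omega)$ adapted.

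The main point to settle is that the fibre operator $\mathcal{E}^{(n)}_\omega$ is exactly the trace-preserving conditional expectation onto $M_n(\omega)$, so that the displayed fibrewise identity is literally the martingale relation in $L_1(M_\infty(\omega),\tau_\omega)$. This is supplied by Theorem \ref{2.3} together with uniqueness: the operator $\mathcal{E}^{(n)}_\omega$ is produced there as a normal positive $\tau_\omega$-preserving projection of $M(\omega)$ onto $M_n(\omega)$ satisfying the module property $\mathcal{E}^{(n)}_\omega(a x b)=a\,\mathcal{E}^{(n)}_\omega(x)\,b$ for $a,b\in M_n(\omega)$, and such an operator is unique by Proposition \ref{Exp1} applied to the pair $M_n(\omega)\subset M(\omega)$ with the numerical trace $\tau_\omega$. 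Hence $\mathcal{E}^{(n)}_\omega$ coincides with the conditional expectation determined by $M_n(\omega)$, and the relations valid off $E$ state precisely that $\{x_n(\omega)\}$ is an $L_1(M(\omega),\tau_\omega)$-martingale with respect to $\{M_n(\omega)\}_{n\geq 1}$ for almost all $\omega\in\Omega$. The only delicate bookkeeping is the passage from the countably many almost-everywhere identities to a single null set, which is handled by the countable union $E=\bigcup_{n\geq 1}E_n$ above.
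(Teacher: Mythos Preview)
Your argument is correct and follows essentially the same route as the paper's proof: apply Theorem \ref{2.3} at each level $n$ to disintegrate $\mathcal{E}(x_{n+1}|M_n)=x_n$ fibrewise into $\mathcal{E}_\omega(x_{n+1}(\omega)|M_n(\omega))=x_n(\omega)$ a.e. Your version is more careful than the paper's (explicit handling of the countable union of null sets, verification of adaptedness, and the identification of $\mathcal{E}^{(n)}_\omega$ with the $\tau_\omega$-conditional expectation via uniqueness), but note that Theorem \ref{2.3} itself already shows $T_\omega=\mathcal{E}_\omega$ is the conditional expectation onto $N(\omega)$, so the appeal to uniqueness is not strictly needed.
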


\begin{proof} Let $\mathcal{E}(x_{n+1}|M_n)=x_n$ for every
$n\geq1$. According Theorem \ref{2.3} we have
$$\mathcal{E}_\omega(x_{n+1}(\omega)|M_n(\omega))=\mathcal{E}(x_{n+1}|M_n)(\omega)=x_n(\omega)$$
for almost all $\omega\in \Omega$, i.e. $\{x_n(\omega)\}\subset
L_1(M(\omega),\tau_\omega)$ is a martingale with respect to the
filtration $(M_n(\omega))_{n\geq1}$ of
$\overline{\bigcup\limits_{n}M_n(\omega)}^{w^*}$.

\end{proof}

If for a  \textit{$L_p(M,\Phi)$-martingale} $\{x_n\}$ one has
$\sup\limits_{n\geq 1}\|x_n\|_p \in L^0(\Omega)$ then $\{x_n\}$ is
called a \textit{bounded $L_p(M,\Phi)$-martingale}.

\begin{theorem}\label{3.5}
Let $p>1$. The following statements hold:
\begin{enumerate}
\item[(i)] Let $x\in L_p(M,\Phi)$, $p\geq1$ and define
$x_n=\mathcal{E}(x|M_n)$. Then $\{x_n\}$ is a bounded
$L_p(M,\Phi)$-martingale, and  $x_n\rightarrow x$ in $L_p(M,\Phi)$.

\item[(ii)] Let $\{x_n\}$ be a bounded
$L_p(M,\Phi)$-martingale. Then there exists $x\in L_p(M,\Phi)$ such
that $x_n=\mathcal{E}(x|M_n)$.
\end{enumerate}

\end{theorem}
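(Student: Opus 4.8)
The plan is to reduce both statements to the scalar-trace fibres $L_p(M(\omega),\tau_\omega)$, where the corresponding martingale convergence theorems for numerical traces are classical for $1<p<\infty$, and then to reassemble the conclusions through the bundle representation of Theorem \ref{1.2}. The two transfer tools are the fibrewise decomposition $\mathcal{E}(x|M_n)(\omega)=\mathcal{E}_\omega(x(\omega)|M_n(\omega))$ of Theorem \ref{2.3} and the fibrewise martingale property of Proposition \ref{3.3}. Before using them I would isolate one density lemma: the filtration condition $\overline{\bigcup_n M_n}^{\,w^*}=M$ forces $\overline{\bigcup_n M_n(\omega)}^{\,w^*}=M(\omega)$ for almost every $\omega$, so that $\{M_n(\omega)\}_n$ is a genuine filtration of $M(\omega)$ in each fibre. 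I would prove this through the lifting $\ell$ of Theorem \ref{1.3}: weak-$*$ density of $\bigcup_n M_n$ in the finite algebra $M$ upgrades (via Kaplansky density and finiteness of the trace) to $(bo)$-density of $\bigcup_n L_p(M_n,\Phi)$ in $L_p(M,\Phi)$, and since $\{\ell(x)(\omega):x\in M_n\}$ is total in $L_p(M_n(\omega),\tau_\omega)$, totality of $\bigcup_n L_p(M_n(\omega),\tau_\omega)$ in $L_p(M(\omega),\tau_\omega)$ follows a.e.

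For (i) with $p\geq 1$, the identity $\mathcal{E}(x_{n+1}|M_n)=x_n$ is the tower property, which follows fibrewise from Theorem \ref{2.3} together with the scalar tower identity (or directly from the uniqueness in Proposition \ref{Exp1}); boundedness is immediate from the contraction estimate of Theorem \ref{Exp2}, giving $\sup_n\|x_n\|_p\leq\|x\|_p\in L^0(\Omega)$. For convergence I pass to fibres: by Theorem \ref{2.3}, $x_n(\omega)=\mathcal{E}_\omega(x(\omega)|M_n(\omega))$ a.e., and the density lemma makes $\{M_n(\omega)\}_n$ a filtration of $M(\omega)$, so the classical theorem yields $\|x_n(\omega)-x(\omega)\|_{L_p(M(\omega),\tau_\omega)}\to 0$ a.e. The function $\omega\mapsto\|x_n(\omega)-x(\omega)\|_{L_p(M(\omega),\tau_\omega)}$ represents $\|x_n-x\|_p\in L^0(\Omega)$; since a sequence of measurable functions converging to $0$ a.e. is dominated by its decreasing tail-suprema $g_n(\omega)=\sup_{m\geq n}\|x_m(\omega)-x(\omega)\|_{L_p(M(\omega),\tau_\omega)}$, which are finite a.e. (convergent sequences are bounded) and satisfy $g_n\downarrow 0$, I obtain $\|x_n-x\|_p\stackrel{(o)}{\to}0$, i.e. $x_n$ $(bo)$-converges to $x$ in $L_p(M,\Phi)$.

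For (ii) with $p>1$, put $c=\sup_n\|x_n\|_p\in L^0(\Omega)$. By Proposition \ref{3.3} each $\{x_n(\omega)\}_n$ is a martingale for $\{M_n(\omega)\}_n$, and $\sup_n\|x_n(\omega)\|_{L_p(M(\omega),\tau_\omega)}\leq c(\omega)<\infty$ a.e., so it is a bounded $L_p$-martingale over the finite algebra $M(\omega)$. The classical representation theorem then provides, for a.e. $\omega$, an element $x(\omega)\in L_p(M(\omega),\tau_\omega)$ with $x_n(\omega)=\mathcal{E}_\omega(x(\omega)|M_n(\omega))$ and $x_n(\omega)\to x(\omega)$ in norm. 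Since $x(\omega)=\lim_n x_n(\omega)$ is an a.e. limit of the measurable sections $x_n\in L_0(\Omega,X)$, the section $\omega\mapsto x(\omega)$ is measurable and defines $x\in L_0(\Omega,X)=L_p(M,\Phi)$; because $\mathcal{E}(\cdot|M_n)$ is a contraction the norms $\|x_n\|_p$ increase to $\|x\|_p=\sup_n\|x_n\|_p\leq c\in L^0(\Omega)$, confirming $x\in L_p(M,\Phi)$. Comparing fibres once more through Theorem \ref{2.3} gives $\mathcal{E}(x|M_n)(\omega)=\mathcal{E}_\omega(x(\omega)|M_n(\omega))=x_n(\omega)$ a.e., hence $\mathcal{E}(x|M_n)=x_n$ in $L_p(M,\Phi)$.

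The step I expect to be the main obstacle is the density lemma, together with the measurability of the fibrewise data it feeds. Everything downstream is a fibre-by-fibre application of the classical noncommutative $L_p$-martingale theorems, but those theorems are only useful once I know that the fibre filtrations $\{M_n(\omega)\}_n$ exhaust $M(\omega)$ a.e. and that the representing elements $x(\omega)$ in (ii) glue into a measurable section. Both points require pushing the global weak-$*$ and $(bo)$-density statements down to the stalks through the lifting $\ell$ and invoking the measurability of a.e. limits of measurable sections; this is the one place where the bundle machinery, rather than a purely fibrewise argument, is indispensable.
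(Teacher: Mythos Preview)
Your proposal is correct and follows essentially the same route as the paper: reduce to the fibres via Theorem~\ref{2.3} and Proposition~\ref{3.3}, invoke the classical noncommutative $L_p$-martingale theorems (the paper cites Proposition~2.10 of \cite{P}) in each $L_p(M(\omega),\tau_\omega)$, and reassemble using the bundle identification $\|y\|_p(\omega)=\|y(\omega)\|_{L_p(M(\omega),\tau_\omega)}$. The only difference is that you are more explicit than the paper about two points it takes for granted---the density lemma ensuring $\{M_n(\omega)\}_n$ is a filtration of $M(\omega)$ a.e., and the passage from a.e.\ convergence of $\|x_n-x\|_p(\omega)$ to $(o)$-convergence in $L^0(\Omega)$---both of which you handle correctly.
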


\begin{proof} (i). Due to  $\|x_n\|_p=\|\mathcal{E}(x|M_n)\|_p\leq
\|x\|_p$ we get
$$\sup\limits_{n\geq 1}\|x_n\|_p\leq \|x\|_p\in
L^0(\Omega).$$
 According Theorem \ref{2.3} one has
$$x_n(\omega)=\mathcal{E}(x|M_n)(\omega)=\mathcal{E}_\omega(x(\omega)|M_n(\omega))$$
for almost all $\omega\in \Omega$. Then using Proposition 2.10 (i)
\cite{P}  we obtain
$$\|x_n(\omega)-x(\omega)\|_{L_p(M(\omega),\tau_\omega)}\rightarrow
 0$$ for almost all $\omega\in \Omega$. From
 $$\|x_n-x\|_p(\omega)=\|x_n(\omega)-x(\omega)\|_{L_p(M(\omega),\tau_\omega)}$$
 it follows that $\|x_n-x\|_p\stackrel{(o)}\rightarrow0$ i.e. $x_n\rightarrow x$ in $L_p(M,\Phi)$.

 (ii). Let $x_n\in L_p(M,\Phi)$ be an $L_p(M,\Phi)$-martingale, i.e. $a:=\sup\limits_{n\geq 1}\|x_n\|_p \in
L^0(\Omega)$. Then $$\sup\limits_{n\geq
1}\|x_n(\omega)\|_{L_p(M(\omega),\tau_\omega)} =\sup\limits_{n\geq
1}\|x_n\|_p(\omega)=a(\omega)<\infty$$ for almost all $\omega\in
\Omega$. According Proposition 2.10 (ii) \cite{P}  there exists
$x(\omega)\in L_p(M(\omega),\tau_\omega)$, such that
$x_n(\omega)=\mathcal{E}_\omega(x(\omega)|M_n(\omega))$ for almost
all $\omega\in \Omega$. Again by Proposition 2.10 (i) \cite{P} we
have
$\|x_n(\omega)-x(\omega)\|_{L_p(M(\omega),\tau_\omega)}\rightarrow0$
for almost all $\omega\in \Omega$. This mean $x\in M(\Omega, X)$.
Therefore, $x\in L_p(M,\Phi)$. So
$$x_n=\widehat{\mathcal{E}_\omega(x(\omega)|M_n(\omega))}=\mathcal{E}(x|M_n).$$
This completes the proof.
 \end{proof}

 \begin{corollary}\label{3.6} Let $p>1$. The following statements hold:
 \begin{enumerate}
\item[(i)] Let $x_n\rightarrow x$ in $L_p(M,\Phi)$, then $\mathcal{E}(x_n|N)\rightarrow
\mathcal{E}(x|N)$ in $L_p(M,\Phi)$ for any von Neumann subalgebra
$N$ of $M$.

\item[(ii)] If $x_n\rightarrow x$ in $L_p(M,\Phi)$ and $\overline{\bigcup\limits_{m}M_m}^{w^*}=M$, then $\mathcal{E}(x_n|M_m)\rightarrow x$ in $L_p(M,\Phi)$
as $n,m\rightarrow\infty$.
\end{enumerate}

\end{corollary}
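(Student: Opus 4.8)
The plan is to derive both statements directly from the contractivity of the conditional expectation established in Theorem \ref{Exp2} together with the convergence $\mathcal{E}(x|M_m)\to x$ already proved in Theorem \ref{3.5}(i). No further stalkwise analysis is needed: everything reduces to linearity and the triangle inequality for the $L_0(\Omega)$-valued norm $\|\cdot\|_p$.

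For (i) I would first record that for any von Neumann subalgebra $N$ the operator $\mathcal{E}(\cdot|N)$ is linear and satisfies $\|\mathcal{E}(y|N)\|_p\leq\|y\|_p$ for all $y\in L_p(M,\Phi)$. Applying this to $y=x_n-x$ gives
$$\|\mathcal{E}(x_n|N)-\mathcal{E}(x|N)\|_p=\|\mathcal{E}(x_n-x|N)\|_p\leq\|x_n-x\|_p.$$
Since $x_n\to x$ means $\|x_n-x\|_p\stackrel{(o)}{\to}0$ in $L_0(\Omega)$, the right-hand side $(o)$-converges to zero, whence $\mathcal{E}(x_n|N)\to\mathcal{E}(x|N)$ in $L_p(M,\Phi)$.

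For (ii) I would split via the triangle inequality
$$\|\mathcal{E}(x_n|M_m)-x\|_p\leq\|\mathcal{E}(x_n|M_m)-\mathcal{E}(x|M_m)\|_p+\|\mathcal{E}(x|M_m)-x\|_p.$$
The first summand is bounded by $\|x_n-x\|_p$ exactly as in (i), and this bound is \emph{uniform in} $m$, so it $(o)$-tends to zero as $n\to\infty$. The second summand $(o)$-tends to zero as $m\to\infty$ by Theorem \ref{3.5}(i), applied to the filtration $\{M_m\}$: indeed the hypothesis $\overline{\bigcup\limits_{m}M_m}^{w^*}=M$ is precisely the filtration condition of Definition \ref{3.1}. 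Hence the sum $(o)$-converges to zero as $n,m\to\infty$, which is the assertion.

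The only point requiring care is that both convergences take place in the order-complete lattice $L_0(\Omega)$ rather than in $\mathbb{R}$, so ``as $n,m\to\infty$'' must be read as a joint order-limit. I would make the double-limit bookkeeping explicit by choosing, for a prescribed order majorant, thresholds beyond which each summand is dominated, exploiting that the first estimate is independent of $m$ and the second is independent of $n$. This is the main (and only mild) obstacle; past it the corollary is an immediate consequence of the contraction property and Theorem \ref{3.5}(i).
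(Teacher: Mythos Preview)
Your proof is correct and matches the paper's argument for part (ii) essentially line for line: the same triangle-inequality splitting, the same contraction bound on the first summand, and the same appeal to Theorem \ref{3.5}(i) for the second. For part (i) the paper simply writes ``follows from Theorem \ref{2.3}'' (the stalkwise representation $\mathcal{E}(x)(\omega)=\mathcal{E}_\omega(x(\omega))$), whereas you derive it directly from the contractivity in Theorem \ref{Exp2}; your route is more elementary and in fact is exactly the estimate the paper itself uses implicitly when bounding the first summand in (ii), so no genuine divergence in method.
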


\begin{proof} (i) follows from Theorem \ref{2.3}.

(ii). It is clear that
\begin{eqnarray}\label{36-1}\|\mathcal{E}(x_n|M_m)-x\|_p&=&\|\mathcal{E}(x_n|M_m)-\mathcal{E}(x|M_m)+\mathcal{E}(x|M_m)-x\|_p\nonumber\\[2mm]
&\leq& \|x_n-x\|_p+\|\mathcal{E}(x|M_m)- x\|_p.
\end{eqnarray}
By condition we have $\|x_n-x\|_p\stackrel{(o)}\rightarrow0$, and
Theorem \ref{3.5} (i) implies $\|\mathcal{E}(x|M_m)-
x\|_p\stackrel{(o)}\rightarrow0$. Hence, from \eqref{36-1} one finds
$\mathcal{E}(x_n|M_m)\rightarrow x$ in $L_p(M,\Phi)$ as
$n,m\rightarrow\infty$.
\end{proof}

Now we are going to prove the convergence of weighted averages of
martingales in noncommutative $L_p(M,\Phi)$ spaces.

Let $\mathbf{x}=\{x_n\}$ be an $L_p(M,\Phi)$-martingale and
$\{w_n\}_{n\geq1}$ be a sequence
 of positive numbers such that
$W_n=\sum\limits_{k=1}^nw_k\rightarrow\infty$ as
$n\rightarrow\infty$. \textit{The weighted average} (see
\cite{ki1},\cite{ZH})\ $\sigma_n(\mathbf{x})$ of the martingale
$x=\{x_n\}$ is given by
$$\sigma_n(\mathbf{x})=\frac{1}{W_n}\sum\limits_{k=1}^nw_k x_k.$$

\begin{proposition} \label{3.8}  Let $\mathbf{x}=\{x_n\}$ be an $L_p(M,\Phi)$-martingale and
$\sigma_n(\mathbf{x})$ be a weighted average of $\mathbf{x}$. Then
one has
$$\sup\limits_{n\geq 1}\|x_n\|_p=\sup\limits_{n\geq 1}\|\sigma_n(\mathbf{x})\|_p.$$
\end{proposition}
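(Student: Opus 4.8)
The plan is to establish the two order inequalities
$\sup_n\|\sigma_n(\mathbf{x})\|_p\le\sup_n\|x_n\|_p$ and
$\sup_n\|x_n\|_p\le\sup_n\|\sigma_n(\mathbf{x})\|_p$ separately, both understood for the $L_0(\Omega)$-valued lattice norm. The first (easy) inequality is immediate from the triangle inequality and the $L_0(\Omega)$-homogeneity of the norm: since $W_n=\sum_{k=1}^n w_k$ and the $w_k$ are positive scalars,
$$\|\sigma_n(\mathbf{x})\|_p=\Big\|\frac{1}{W_n}\sum_{k=1}^n w_k x_k\Big\|_p\le\frac{1}{W_n}\sum_{k=1}^n w_k\|x_k\|_p\le\sup_{k\ge1}\|x_k\|_p,$$
and taking the supremum over $n$ yields $\sup_n\|\sigma_n(\mathbf{x})\|_p\le\sup_n\|x_n\|_p$.

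For the reverse inequality I would exploit the conditional expectations themselves. First I record that $x_k\in L_p(M_k,\Phi)$ for each $k$ (as the range of $\mathcal{E}(\cdot|M_k)$) and that the family $\{\mathcal{E}(\cdot|M_m)\}$ enjoys the tower property $\mathcal{E}(\mathcal{E}(\cdot|M_{m+1})|M_m)=\mathcal{E}(\cdot|M_m)$ whenever $M_m\subset M_{m+1}$; the latter follows from the trace-invariance $\Phi(\mathcal{E}(x)y)=\Phi(xy)$ for $y\in N$ established in Proposition \ref{Exp1} together with the uniqueness of the conditional expectation. Combining these facts with the martingale identity $\mathcal{E}(x_{k+1}|M_k)=x_k$ gives, by induction, $\mathcal{E}(x_k|M_m)=x_m$ for all $k\ge m$, while trivially $\mathcal{E}(x_k|M_m)=x_k$ for $k\le m$. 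Fixing $m$ and applying $\mathcal{E}(\cdot|M_m)$ termwise to $\sigma_n(\mathbf{x})$ for $n\ge m$ then produces
$$\mathcal{E}(\sigma_n(\mathbf{x})|M_m)=\frac{1}{W_n}\Big(\sum_{k=1}^m w_k x_k+(W_n-W_m)x_m\Big).$$

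Now I let $n\to\infty$. Since $W_n\to\infty$ while $\sum_{k=1}^m w_k x_k$ and $x_m$ are fixed elements of $L_p(M,\Phi)$, the estimate
$$\|\mathcal{E}(\sigma_n(\mathbf{x})|M_m)-x_m\|_p\le\frac{1}{W_n}\sum_{k=1}^m w_k\|x_k\|_p+\frac{W_m}{W_n}\|x_m\|_p$$
shows that $\mathcal{E}(\sigma_n(\mathbf{x})|M_m)$ $(bo)$-converges to $x_m$, whence, by the reverse triangle inequality, $\|\mathcal{E}(\sigma_n(\mathbf{x})|M_m)\|_p\stackrel{(o)}\to\|x_m\|_p$. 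On the other hand, $\mathcal{E}(\cdot|M_m)$ is an $L_p(M,\Phi)$-contraction by Theorem \ref{Exp2}, so $\|\mathcal{E}(\sigma_n(\mathbf{x})|M_m)\|_p\le\|\sigma_n(\mathbf{x})\|_p\le\sup_j\|\sigma_j(\mathbf{x})\|_p$ for every $n$. Passing to the limit in $n$ gives $\|x_m\|_p\le\sup_j\|\sigma_j(\mathbf{x})\|_p$, and since $m$ was arbitrary, $\sup_m\|x_m\|_p\le\sup_j\|\sigma_j(\mathbf{x})\|_p$. Combined with the first paragraph, this gives the asserted equality.

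The triangle-inequality estimates are routine; the point demanding care is the second paragraph. The main obstacle is to verify the tower property and the resulting identity $\mathcal{E}(x_k|M_m)=x_m$ in the present center-valued setting, and then to make sure that all limits are taken in the $(bo)$-topology, so that the order-continuity of the lattice norm may legitimately be invoked to pass from the $(bo)$-convergence of $\mathcal{E}(\sigma_n(\mathbf{x})|M_m)$ to the $(o)$-convergence of its norm.
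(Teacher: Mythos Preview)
Your argument is correct, and it follows a genuinely different route from the paper's proof. The paper does not argue directly: instead it invokes the measurable-bundle representation (Theorem~\ref{2.3}) to pass to the fibres $L_p(M(\omega),\tau_\omega)$, where the identity $\sup_n\|x_n(\omega)\|=\sup_n\|\sigma_n(\mathbf{x}(\omega))\|$ is quoted as a black box from Zhang--Hou \cite[Theorem~3.2]{ZH}; the $L_0(\Omega)$-valued equality then follows by reading the pointwise equality back as an equality of equivalence classes. Your proof, by contrast, is intrinsic to the $L_0(\Omega)$-normed space: the easy inequality is convexity, and the hard inequality is obtained by projecting $\sigma_n(\mathbf{x})$ onto $L_p(M_m,\Phi)$ via the contractive conditional expectation, identifying the limit as $x_m$ thanks to $W_n\to\infty$ and the martingale/tower identities, and then using order-closedness of the inequality under $(o)$-limits.

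The trade-off is this: the paper's approach is an instance of the same disintegration technique that drives the whole article, so the proof is short once Theorem~\ref{2.3} and the Zhang--Hou result are in hand; your approach avoids both the bundle machinery and the external reference, and would work verbatim in any Banach--Kantorovich $L_p$-module admitting a system of trace-preserving contractive conditional expectations. One small point worth making explicit in your write-up is that the proposition does not assume $\sup_n\|x_n\|_p\in L_0(\Omega)$, so the suprema are a priori $[0,\infty]$-valued; your inequalities remain valid in that extended sense, but it is cleaner to note that the first paragraph already gives $\sup_n\|\sigma_n(\mathbf{x})\|_p\le\sup_n\|x_n\|_p$, so the second paragraph is only needed (and only makes sense as stated) on the set where $\sup_n\|\sigma_n(\mathbf{x})\|_p$ is finite.
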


\begin{proof} Due to $$(\sup\limits_{n\geq 1}\|x_n\|_p)(\omega)=\sup\limits_{n\geq
1}\|x_n(\omega)\|_{L_p(M(\omega),\tau_\omega)}$$ for almost all
$\omega\in \Omega$, from Theorem 3.2 \cite{ZH} we obtain
\begin{eqnarray*}
(\sup\limits_{n\geq 1}\|x_n\|_p)(\omega)&=&\sup\limits_{n\geq
1}\|x_n(\omega)\|_{L_p(M(\omega),\tau_\omega)}\\[2mm]
&=&\sup\limits_{n\geq
1}\|\sigma_n(\mathbf{x}(\omega))\|_{L_p(M(\omega),\tau_\omega)}\\[2mm]
&=&\sup\limits_{n\geq 1}\|\sigma_n(\mathbf{x})\|_p(\omega)
\end{eqnarray*}
for almost all $\omega\in \Omega$, here we have used the notation
$\mathbf{x}(\omega))=\{x_n(\w)\}$.  Hence, we obtain the required
equality.
\end{proof}

\begin{theorem} \label{3.9}  Let $\mathbf{x}=\{x_n\}$ be an $L_p(M,\Phi)$-martingale and
$\sigma_n(x)$ be a weighted average  of $\mathbf{x}$.  Then one has
$\{x_n\}_{n\geq1}$
 converges in $L_p(M,\Phi)$ if and only if
 $\sigma_n(\mathbf{x})$  converges in $L_p(M,\Phi)$.

\end{theorem}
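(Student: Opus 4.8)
The plan is to run the two reductions that already underlie Proposition \ref{3.8} and Theorem \ref{2.3}, namely the stalkwise representation together with the sup-identity of Proposition \ref{3.8} and the martingale characterization of Theorem \ref{3.5}, and to split the biconditional into a ``Tauberian'' converse and a ``regularity'' forward implication. The converse is where the real content sits, but it turns out to be handled entirely by the tools already developed in the paper; the forward implication is a Toeplitz-type regularity statement for weighted means. Throughout I use Theorem \ref{1.2} to identify $x_n$ with its section $x_n(\omega)\in L_p(M(\omega),\tau_\omega)$, the relation $\|u\|_p(\omega)=\|u(\omega)\|_{L_p(M(\omega),\tau_\omega)}$, and the linearity of $H$, which gives $\sigma_n(\mathbf{x})(\omega)=\sigma_n(\mathbf{x}(\omega))$ a.e.

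For the converse, suppose $\{\sigma_n(\mathbf{x})\}$ converges in $L_p(M,\Phi)$. A $(bo)$-convergent sequence has order-bounded norms, so $\sup_{n\ge1}\|\sigma_n(\mathbf{x})\|_p\in L_0(\Omega)$; Proposition \ref{3.8} then transfers this bound to $\sup_{n\ge1}\|x_n\|_p\in L_0(\Omega)$, i.e. $\{x_n\}$ is a bounded $L_p(M,\Phi)$-martingale. Theorem \ref{3.5}(ii) now produces $x\in L_p(M,\Phi)$ with $x_n=\mathcal{E}(x|M_n)$, and Theorem \ref{3.5}(i) gives $x_n\to x$ in $L_p(M,\Phi)$. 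Thus convergence of the weighted averages forces convergence of the martingale, using nothing beyond the results already proved.

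For the forward implication, suppose $x_n\to x$ in $L_p(M,\Phi)$, so that $\|x_n-x\|_p\stackrel{(o)}{\rightarrow}0$. Since $\tfrac{1}{W_n}\sum_{k=1}^n w_k=1$, one has $\sigma_n(\mathbf{x})-x=\tfrac{1}{W_n}\sum_{k=1}^n w_k(x_k-x)$, whence, by the lattice-norm inequalities, $\|\sigma_n(\mathbf{x})-x\|_p\le \tfrac{1}{W_n}\sum_{k=1}^n w_k\,\|x_k-x\|_p$. It then remains to prove the order-Toeplitz fact that if $a_k:=\|x_k-x\|_p\ge0$ satisfies $a_k\stackrel{(o)}{\rightarrow}0$ and $W_n\to\infty$, then $\tfrac{1}{W_n}\sum_{k=1}^n w_k a_k\stackrel{(o)}{\rightarrow}0$; choosing $b_k\downarrow0$ with $a_k\le b_k$ and estimating the weighted mean of the decreasing majorant $b_k$ by splitting off an initial block gives the claim. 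Hence $\sigma_n(\mathbf{x})\to x$ in $L_p(M,\Phi)$. Equivalently, this direction may be obtained stalkwise from the numerical-trace regularity result of \cite{ZH}, the almost-everywhere convergence being upgraded to $(bo)$-convergence via the order bound of Proposition \ref{3.8}.

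I expect the main obstacle to lie in the order-theoretic Toeplitz step and, more generally, in the passage between $(o)$-convergence of the $L_0(\Omega)$-valued norms and the pointwise behaviour of the stalks: mere almost-everywhere convergence does not by itself yield $(o)$-convergence, and one must exhibit a monotonically decreasing order-majorant, the existence of which is secured only after the sup-identity of Proposition \ref{3.8} has transferred boundedness between the two norm sequences. A secondary point, needed to make Theorem \ref{3.5} applicable in the converse, is that the stalkwise limit assembles into a measurable section of $L_0(\Omega,X)\cong L_p(M,\Phi)$; this holds because an almost-everywhere limit of measurable sections is measurable and the order bound guarantees $\|x\|_p\in L_0(\Omega)$.
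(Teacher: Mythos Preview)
Your proof is correct, but the converse direction takes a genuinely different route from the paper's. The paper handles both implications purely stalkwise: it passes to $L_p(M(\omega),\tau_\omega)$ via Theorem~\ref{1.2}, invokes Theorem~3.7 of \cite{ZH} (which already gives the full equivalence for numerical-trace martingales) at each $\omega$, and then reads the a.e.\ pointwise conclusion back as $(o)$-convergence of the $L_0(\Omega)$-valued norms. Your converse instead stays at the global level of $L_p(M,\Phi)$: you use Proposition~\ref{3.8} to transfer the order bound on $\|\sigma_n(\mathbf{x})\|_p$ to $\|x_n\|_p$, and then invoke Theorem~\ref{3.5} directly to obtain convergence of the martingale. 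This is arguably cleaner, since it reuses only the paper's own bounded-martingale theorem rather than appealing a second time to the external result of \cite{ZH}; it also makes transparent that the Tauberian direction ultimately rests on bounded-martingale convergence for $p>1$. For the forward direction your second option (stalkwise via \cite{ZH}) is exactly the paper's argument, while your first option replaces it by a self-contained order-Toeplitz lemma. One remark: your concern that a.e.\ convergence of the stalk norms need not yield $(o)$-convergence is unnecessary here, since for \emph{sequences} in $L_0(\Omega)$ with finite $\mu$ the two notions coincide (a pointwise convergent sequence is automatically pointwise bounded, so $g_n:=\sup_{m\ge n}|f_m-f|$ lies in $L_0(\Omega)$ and decreases to zero); the paper's proof uses exactly this equivalence without comment, and Proposition~\ref{3.8} is not actually needed to manufacture the majorant.
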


\begin{proof} Let $x_n\rightarrow y$ in  $L_p(M,\Phi)$. Then $x_n(\omega)\rightarrow y(\omega)$ in $L_p(M(\omega),\tau_\omega)$ for almost all $\omega\in
\Omega$. According to Theorem 3.7 \cite{ZH}
$\sigma_n(\mathbf{x}(\omega))\rightarrow y(\omega)$ for almost all
$\omega\in \Omega$. Then
$\sigma_n(\mathbf{x})(\omega)=\sigma_n(\mathbf{x}(\omega))\rightarrow
y(\omega)$ in $L_p(M(\omega),\tau_\omega)$ for almost all $\omega\in
\Omega$. Hence
$$\|\sigma_n(\mathbf{x})-y\|_p(\omega)=\|\sigma_n(\mathbf{x}(\omega))-y(\omega)\|_{L_p(M(\omega),\tau_\omega)}\rightarrow
0$$ for almost all $\omega\in \Omega$, which means
$\sigma_n(\mathbf{x})$ converges in $L_p(M,\Phi)$.

Let $\sigma_n(\mathbf{x})\rightarrow y$ in $L_p(M,\Phi)$. Then
$\sigma_n(\mathbf{x}(\omega))\rightarrow y(\omega)$ in
$L_p(M(\omega),\tau_\omega)$ for almost all $\omega\in \Omega$.
Again by Theorem 3.7 \cite{ZH} $x_n(\omega)\rightarrow y(\omega)$ in
$L_p(M(\omega),\tau_\omega)$ for almost all $\omega\in \Omega$.
Hence $\|x_n-y\|_p\stackrel{(o)}\rightarrow0.$ This completes the
proof.
\end{proof}

\section*{Acknowledgments}
 The first author (I.G) acknowledges the  MOHE Grant FRGS13-071-0312.

\bibliographystyle{amsplain}

\end{document}